\documentclass{article}

\usepackage{amsmath}
\usepackage{amsthm}
\usepackage{amsfonts}
\usepackage{mathtools}
\usepackage[numbers]{natbib}
\usepackage{xcolor}
\usepackage{caption} 

\usepackage{hyperref}
\hypersetup{colorlinks=true, linkcolor=blue, citecolor=blue}

\newcommand{\R}{\mathbb{R}}
\newcommand{\C}{\mathbb{C}}

\newcommand{\cL}{\mathcal{L}}
\newcommand{\cA}{\mathcal{A}}

\newcommand{\cO}{\mathcal{O}}

\usepackage{bm}
\newcommand{\Ab}{\bm{A}}
\newcommand{\Bb}{\bm{B}}
\newcommand{\Eb}{\bm{E}}
\newcommand{\Mb}{\bm{M}}
\newcommand{\Pb}{\bm{P}}

\newcommand{\Pt}{\widetilde{P}}

\renewcommand{\exp}[1]{\mathrm{e}^{#1}}
\newcommand{\domain}[1]{\mathcal{D}(#1)}
\DeclareMathOperator{\Id}{Id}
\newcommand{\hfine}{\mathfrak{h}}
\renewcommand{\phi}{\varphi}

\DeclarePairedDelimiter{\iprod}{(}{)}

\DeclarePairedDelimiterX{\norm}[1]{\lVert}{\rVert}{#1}
\DeclarePairedDelimiter\normLH{\lVert}{\rVert_{\cL(H)}}
\DeclarePairedDelimiter\maxnorm{\lVert}{\rVert_{\infty}}
\DeclarePairedDelimiter\normLHh{\lVert}{\rVert_{\cL(H_h)}}

\newcommand{\diff}[1]{\mathrm{d}#1}
\newcommand{\ds}{\diff{s}}

\newcommand{\ddx}[1]{\frac{\mathrm{d}}{\diff{#1}}}
\newcommand{\ddjx}[2]{\frac{\mathrm{d}^{#1}}{\diff{#2}^{#1}}}
\newcommand{\dddx}[1]{\ddjx{2}{#1}}

\newcommand{\ddt}{\ddx{t}}
\newcommand{\dds}{\ddx{s}}
\newcommand{\ddds}{\dddx{s}}

\newcommand{\SH}{\Sigma(H)}
\newcommand{\SHP}{\Sigma^+(H)}
\newcommand{\SHh}{\Sigma(H_h)}
\newcommand{\SHhP}{\Sigma^+(H_h)}

\numberwithin{equation}{section}
\theoremstyle{plain}
\newtheorem{theorem}{Theorem}[section]
\newtheorem{lemma}[theorem]{Lemma}

\newtheorem{assumption}{Assumption}

\theoremstyle{definition}
\newtheorem{remark}{Remark}[section]

\title{Convergence analysis of a full discretization of operator-valued differential Riccati equations}

\author{Eskil Hansen \and Tony Stillfjord \and Teodor \r{A}berg}

\begin{document}

\maketitle

\begin{abstract}
  In recent previous work [E.~Hansen, T.~Stillfjord and T.~\r{A}berg, \emph{SIAM J.\ Numer.\ Anal.}, to appear], we analyzed the convergence of operator splitting methods applied to operator-valued differential Riccati equations (DRE). In this paper, we extend these results by analyzing the convergence of a full discretization based on finite elements in space and Lie splitting in time. As far as we are aware, this is the first such analysis for DRE. There are very few analyses of temporal discretizations of DRE overall, and none of them have been combined with spatial discretizations. However, it is clearly vital to know when the full discretization converges, since this is what will be used in practical applications.
  Our main result is that except for logarithmic factors, the method converges with order one in time and order two in space, under fairly weak assumptions on the problem data. This is illustrated by a numerical experiment based on an application in optimal control.
\end{abstract}

\section{Introduction} \label{sec:introduction}

This paper is a continuation of the work in~\cite{HansenStillfjordAaberg2026}, in which the convergence of splitting schemes applied to operator-valued differential Riccati equations (DRE) was studied. Here, we combine such a temporal discretization with a spatial discretization based on finite elements and analyze the convergence of the resulting full discretization.

A DRE is a semi-linear differential equation of the form
\begin{equation}\label{eq:DRE}
  \dot{P}(t) = A^* P(t) + P(t)A + Q - P(t)SP(t), \quad t \in (0,T], \quad P(0) = P_0.
\end{equation}
Such equations play a prominent role in optimal control, where their solutions provide the optimal feedback laws in finite-horizon linear quadratic regulator (LQR) problems. We consider the case when the problem data $A$, $Q$ and $S$ are all operators on a Hilbert space $H$, which means that also the solution values $P(t)$ are operators on $H$. This corresponds to optimal control of partial differential equations. To approximate these solutions in practice thus requires both a temporal and a spatial discretization.

Many different methods for DRE have been proposed in the literature. While essentially all spatial discretizations are based on finite elements, a wide variety of temporal discretizations have been suggested. We refer to~\cite{HansenStillfjordAaberg2026} for a recent overview. As also noted there, there are very few rigorous analyses of temporal discretizations of DRE in the literature. Spatial discretizations have been analysed more extensively, but mostly for algebraic Riccati equations. We are not aware of any paper which analyses a combination of temporal and spatial discretizations for DRE. 

The aim of this paper is therefore to provide a rigorous convergence analysis of such a combination of methods. As in~\cite{HansenStillfjordAaberg2026}, we consider the Lie splitting scheme in time, which iterates between approximating the solutions to the subproblems $\dot{P}(t) = A^* P(t) + P(t)A + Q$ and $\dot{P}(t) = - P(t)SP(t)$. This can be done much more efficiently than directly approximating the solution to~\eqref{eq:DRE}. In space, we consider a large class of discretizations that covers all standard finite element discretizations applicable to problems of the type $Ax = f$.

The analysis uses our temporal convergence results in~\cite{HansenStillfjordAaberg2026} and the spatial convergence results in~\cite{KrollerKunisch1991} as a starting point.
Let $h$ denote the spatial discretization parameter, let $\tau$ be the time step, and let $P_{n, h} \approx P(n\tau)$ denote the fully discretized approximation extended to an operator on $H$. Then our main result is that for all sufficiently small $h$ and $\tau$,
\begin{equation*}
  \normLH{P(n\tau) - \pi_h^* P_{n,h} \pi_h} \leq C 
    \bigl( h^2(n\tau)^{-1} + h^2(1 + \vert \log h \vert) 
      + \tau ( 1 +  \vert \log \tau \vert) \bigr)
\end{equation*}
That is, away from $t = 0$ the method is first-order convergent in time and second-order convergent in space, except for logarithmic factors which arise due to relatively weak assumptions on the problem. The singularity at $t = 0$ arises from a corresponding singularity of~\eqref{eq:DRE}. We emphasize that the convergence is measured in the operator norm $\normLH{\cdot}$ for bounded operators on a Hilbert space $H$. This is much stronger than the pointwise convergence of the type $\norm{P(n\tau)x - P_{n, h}x} \to 0$ for $x \in H$ that is frequently considered, e.g.\ in~\cite{BennerMena2016}.

In brief, the analysis approach is as follows:
\begin{enumerate}
\item We first compare the exact solution $P$ to the spatially semi-discretized approximation $P_h$, where the initial condition of the latter is a spatial discretization of $P_0$. The difference $\normLH{P(t) - P_h(t)}$ can be bounded using the results in~\cite{KrollerKunisch1991}.

\item Inspired by the approach in the lecture notes by Crouzeix~\cite{CrouzeixLecNotes}, we compare $P_h$ to $\Pt_h$ where $\Pt_h$ is the same spatial semi-discretization of~\eqref{eq:DRE} as $P_h$, but with a different, carefully chosen, regularized initial condition $\Pt_{0, h}$.

\item After proving that $\Pt_h$ is uniformly bounded with respect to $h$, we apply the results in~\cite{HansenStillfjordAaberg2026} to $\Pt_h(n\tau) - \Pt_{n,h}$, where $\Pt_{n, h}$ is the fully discretized DRE approximation with initial condition $\Pt_{0, h}$.

\item Finally, we bound $\Pt_{n, h} - P_{n, h}$ by following a similar procedure as to what would be used for the corresponding vector-valued elliptic problems.
 
\end{enumerate}

The rest of the paper is organized in the following manner. Section~\ref{sec:setting} specifies the details of the setting which we consider, and the main consequences of the assumptions on the problem data $A$, $S$, $Q$ and $P_0$. The properties of the family of spatial discretizations which fit into our framework are stated in Section~\ref{sec:spatial} together with important a priori bounds and convergence results. Then the details of the temporal discretization described in~\cite{HansenStillfjordAaberg2026} is reviewed in Section~\ref{sec:temporal}, and important auxiliary results are shown to be independent of $h$.
Section~\ref{sec:analysis_Lie} contains the main result and the details of the proof which is outlined in the previous paragraph. These theoretical results are illustrated by a numerical experiment in Section~\ref{sec:experiments}, and concluding remarks are given in Section~\ref{sec:conclusions}.

\section{Setting}\label{sec:setting}
The setting in which we will analyze the method is the same as in~\cite{HansenStillfjordAaberg2026}, but we restate the important notation here for the convenience of the reader.
Let $H$ be a Hilbert space over $\mathbb{C}$, and $U$, $Y$ be two auxiliary Hilbert spaces.
In the context of LQR problems, $U$ and $Y$ correspond to the input and output spaces, respectively, while $H$ is the state space.
We denote the inner product of a generic Hilbert space $X$ by $\iprod{\cdot,\cdot}_X$ and the induced norm by $\norm{\cdot}_X$. In both cases the subscript will be omitted if the space is clear from context.
Furthermore, $\cL(X, Z)$ denotes the Banach space of linear bounded operators from $X$ to another Hilbert space $Z$,
equipped with the usual operator norm. When $X = Z$, this space will be denoted by $\cL(X)$.
The Hilbert space adjoint of an operator $V: Z \to X$ is denoted by $V^*: X \to Z$.
The subspace of $\cL(X)$ consisting of self-adjoint operators, i.e.\ those satisfying
$\iprod{Vx,y} = \iprod{x,Vy}$ for all $x,y \in X$, will be denoted by $\Sigma(X)$. Those operators which are additionally non-negative, i.e.\ where $\iprod{Vx,x} \ge 0$ for all $x \in X$, will be denoted by $\Sigma^+(X)$.
Finally, we denote the usual supremum norm by
\begin{equation*}
  \maxnorm{P} = \sup_{t \in (0,T)} \norm{P(t)}_{H}.
\end{equation*}
Throughout the paper, $C$ will denote an arbitrary positive constant which
may be different from line to line. It will often depend on the problem data $T$, $\normLH{Q}$ and $\normLH{S}$, but for brevity we only specify its dependence on other parameters. 

We make the following standard assumptions in order to ensure the existence of solutions to~\eqref{eq:DRE} following the framework presented in~\cite{Bensoussan2007}.
\begin{assumption}\label{ass:operators}
  There are operators $B \in \cL(U,H)$ and $E \in \cL(H,Y)$ such that $Q$ and $S$ can be factorized as $Q=E^*E$ and $S=BB^*$. Further, $P_0 \in \SHP$, and the unbounded operator $A: \domain{A} \subset H \to H$ is the generator of an exponentially stable analytic $C_0$-semigroup $\exp{t A}$.
\end{assumption}
\begin{remark}
  The assumption of exponential stability is not a constraint, since if $A$ is the generator of an analytic semigroup that is not exponentially stable, then
  one can always find $\lambda$ such that the semigroup generated by $A-\lambda I$ is exponentially stable. While this shift alters~\eqref{eq:DRE}, the changes do not lead to substantive differences in the analysis.
To simplify the exposition, we therefore assume throughout that $\exp{t A}$ is exponentially stable, and outline the necessary modifications for when it is not in Appendix~\ref{app:reformulation}.
\end{remark}
Next, for $P \in \SH$ define the bilinear map $\tilde{\phi}_P(\cdot, \cdot) : \domain{A} \times \domain{A} \to \C$ by
\begin{equation*}
  \tilde{\phi}_P(x,y) = \iprod{Px,Ay} + \iprod{Ax,Py},
\end{equation*}
and consider the set of $P$ such that it is bounded with respect to the norm of $H$, that is
\begin{equation*}
  \domain{\cA} = \Big \{ P \in \SH : \vert \tilde{\phi}_P(x,y) \vert \leq C \norm{x}\norm{y} \; \text{for all } x,y \in \domain{A} \Big \}.
\end{equation*}
For such $P$, $\tilde{\phi}_P$ can be extended to an operator $\phi_P : H \times H \to \C$, and this gives rise to
the operator $\cA: \domain{\cA} \to \SH$ defined by
\begin{equation*}
  (\cA(P) x, y) = \phi_P(x,y) \quad \text{for all } x, y \in H.
\end{equation*}
This operator is the proper extension of the sum $A^* P + P A$, which is not necessarily in $\SH$. In particular, it holds that
\begin{equation*}
  A^* P x + P A x = \cA(P) x \quad \text{for all } x \in \domain{A}.
\end{equation*}
when $P \in \domain{\cA}$.
For further details and a proof of the previous statement, we refer to~\cite[Part IV]{Bensoussan2007}.
From now on, we will be working with the equation
\begin{equation}\label{eq:DRE_alt}
  \dot{P}(t) = \cA P(t) + Q - P(t) S P(t), \quad t \in (0,T], \quad P(0) = P_0,
\end{equation}
instead of~\eqref{eq:DRE}. Under Assumption~\ref{ass:operators}, this problem has a unique solution, see~\cite{HansenStillfjordAaberg2026} and~\cite[Part IV]{Bensoussan2007}. To guarantee sufficient regularity of this solution for the temporal discretization to converge with the expected order, we additionally make the following assumption, cf.~\cite{HansenStillfjordAaberg2026}:
\begin{assumption}
  \label{ass:p_0_reg}
  $P_0$ is in the domain of $\cA$.
\end{assumption}

Furthermore, there is a semigroup $\exp{t\cA}: \SH \to \SH$ associated with $\cA$, defined by
\begin{equation*}
  \exp{t\cA}P = \exp{t A^*} P \exp{t A}.
\end{equation*}
It is important to note that $\exp{t\cA}$ is not (in general) strongly continuous, which is why we do not say that it is \emph{generated} by $\cA$. Nevertheless, we have the pointwise continuity $\lim_{t \to 0} \exp{t\cA}Px = Px$ for any $x \in H$ and the important identity  $\ddt \exp{t\cA} P = \cA \exp{t\cA}P$ holds for $P \in \domain{\cA}$ when $t > 0$. The semigroup is also bounded in the way one would expect, as summarized in the following theorem:
\begin{theorem}\label{thm:sg_bounds}
  Let $A$ be the generator of an exponentially stable analytic semigroup. Then there exists a $\omega_0 < 0$ such that
  \begin{alignat*}{3}
    &\normLH{\exp{t A}} &&\leq C\exp{\omega_0 t}, &&\quad t \ge 0,\\
    &\normLH{A\exp{t A}} &&\leq \frac{C \exp{\omega_0 t}}{t}, &&\quad t > 0.
  \end{alignat*}
  Similarly, with $\cA$ defined as above and $M \in \cL(H)$,
  \begin{alignat*}{3}
    &\normLH{\exp{t\cA} M} &&\leq C\exp{2\omega_0 t} \normLH{M}, &&\quad t \ge 0, \\
    &\normLH{\cA\exp{t \cA} M} &&\leq \frac{C\exp{2 \omega_0 t}}{t} \normLH{M}, &&\quad t > 0.
  \end{alignat*}
\end{theorem}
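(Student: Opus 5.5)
The first two bounds are the standard estimates for exponentially stable analytic semigroups, and the second two follow from them through the representation $\exp{t\cA}M = \exp{t A^*} M \exp{t A}$.

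\textbf{Bounds for $\exp{tA}$.} Exponential stability gives $\omega_1 < 0$ with $\normLH{\exp{tA}} \le C\exp{\omega_1 t}$. Choose any $\omega_0 \in (\omega_1, 0)$; the first bound then holds trivially.

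For the second bound, analyticity gives $\normLH{A\exp{tA}} \le C/t$ for $t \in (0,1]$. For $t > 1$ we factor
\begin{equation*}
  A\exp{tA} = A\exp{A}\,\exp{(t-1)A},
\end{equation*}
which gives
\begin{equation*}
  \normLH{A\exp{tA}} \le C\exp{\omega_1 (t-1)} \le C\exp{\omega_0 t}/t .
\end{equation*}
The last inequality holds because $t\exp{(\omega_1-\omega_0)t}$ is bounded on $[1,\infty)$. On $(0,1]$, the factor $\exp{\omega_0 t}$ is bounded below, so it can be absorbed into $C$. This is the reason for choosing $\omega_0$ slightly larger than the true decay rate.

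\textbf{Bounds for $\exp{t\cA}$.} Since $(\exp{tA})^* = \exp{tA^*}$ has the same norm,
\begin{equation*}
  \normLH{\exp{t\cA}M} \le \normLH{\exp{tA^*}}\,\normLH{M}\,\normLH{\exp{tA}} \le C^2\exp{2\omega_0 t}\normLH{M}.
\end{equation*}

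For the last bound, set $P = \exp{t\cA}M$ with $t>0$. We first check that $P \in \domain{\cA}$ and identify $\cA(P)$. For $x, y \in \domain{A}$,
\begin{equation*}
  \tilde\phi_P(x,y) = \iprod{M\exp{tA}x, \exp{tA}Ay} + \iprod{M\exp{tA}Ax, \exp{tA}y} .
\end{equation*}
Using $\exp{tA}A y = A\exp{tA}y$ and the bounds from the first part, this quantity is bounded by
\begin{equation*}
  2\,\normLH{M}\,\normLH{\exp{tA}}\,\normLH{A\exp{tA}}\,\norm{x}\norm{y} \le \frac{C\exp{2\omega_0 t}}{t}\normLH{M}\norm{x}\norm{y}.
\end{equation*}
Hence $P \in \domain{\cA}$. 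The extension $\phi_P$ is given by the same expression for all $x, y \in H$, namely
\begin{equation*}
  \cA(P) = (A\exp{tA})^* M \exp{tA} + \exp{tA^*} M A\exp{tA},
\end{equation*}
where $A\exp{tA}$ is bounded. The norm of $\cA(P)$ equals the norm of the sesquilinear form $\phi_P$. By density of $\domain{A}$ in $H$, that norm is bounded by the constant above, which gives the fourth estimate.

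\textbf{Main obstacle.} The only delicate point is this last step. One must justify that $\exp{t\cA}M$ lies in $\domain{\cA}$ even though $M$ itself need not, and do so through the form definition of $\cA$ rather than through the formal sum $A^*P + PA$. The smoothing of the analytic semigroup supplies this, as shown above.
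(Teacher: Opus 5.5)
Your proposal is correct and is essentially the argument the paper has in mind. The paper's proof only points to standard analytic-semigroup estimates and to the analogous Lemma 2.5 of the earlier work; you write that standard argument out in full: the factorization $\exp{t\cA}M = \exp{tA^*}M\exp{tA}$, the smoothing bound on the factor carrying $A$, and the slight increase of the decay rate to absorb the factor $t$.

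One cosmetic point: the second term of $\tilde{\phi}_P(x,y)$ is $\iprod{\exp{tA}Ax, M\exp{tA}y}$. This equals your expression only because $M$ is self-adjoint, which it must be for $\cA$ to be defined. In any case it gives the same bound.
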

\begin{proof}
  This follows directly from standard results on analytic semigroups in the same way as Lemma 2.5 in~\cite{HansenStillfjordAaberg2026}.
\end{proof}
\begin{remark}\label{remark:omega0}
  The bounds in Theorem~\ref{thm:sg_bounds} will be used in Lemma~\ref{lemma:cA_invertible} and~\ref{lemma:op_parabolic} with $\omega_0 < 0$ to ensure that $\cA$ and its approximations have suitable properties. In the rest of the paper, we typically use the slightly weaker bounds that arise from replacing $\omega_0$ by $0$.
\end{remark}

\subsection{Spatial discretization}\label{sec:spatial}
We assume that we have a family of finite dimensional subspaces $H_h \subset H$ depending on a discretization parameter $h > 0$, with $\norm{u}_{H_h} = \norm{\pi_h^*u}_{H}$ for any
$u$ in $H_h$. Here, the operator $\pi_h: H \to H_h$ denotes the $H$-orthogonal projection onto $H_h$ and $\pi_h^* : H_h \to H$ is the isometric injection operator.
We note that given an operator $P_h \in \cL(H_h)$, the natural extension to an operator in $\cL(H)$ is given by $\pi_h^* P_h \pi_h$, and it holds that
\begin{equation}\label{eq:op_extension}
  \normLH{\pi_h^* P_h \pi_h} = \normLHh{P_h}.
\end{equation}
Furthermore, we have a family of operators $A_h: H_h \to H_h$,
which serve as approximations of the operator $A$. 

Following~\cite[Chapter 4]{LasieckaTriggiani2000},we assume that the approximating spaces and operators 
fulfill the following bounds:
\begin{assumption}
  \label{ass:disc_updated}
  Let $A$ fulfill the assumptions given in Assumption~\ref{ass:operators} and let $h_0 > 0$. The following holds for all $h \le h_0$. First, the approximating spaces $H_h$ have the property that $\norm{x - \pi_h x} \to 0$ as $h \to 0$ for all $x \in H$.
  Furthermore,
  $A_h \in \cL(H_h)$ satisfies
  \begin{align}
    &\normLH{\pi_h^* \exp{t A_h} \pi_h} \leq C \exp{\omega_0 t},\quad t \ge 0,
    \label{eq:unif_analyt_0}\\
     &\normLH{\pi_h^* A_h \exp{t A_h} \pi_h} \leq C \frac{\exp{\omega_0 t}}{t}, \quad t > 0.
    \label{eq:unif_analyt_1}
  \end{align}
  Finally, the operator $A_h$ approximates $A$ in the sense that
  \begin{equation}
   \normLH{A^{-1} - \pi_h^* A_h^{-1}\pi_h } \leq C h^2.
   \label{eq:elliptic_bound}
  \end{equation}
\end{assumption}

\begin{remark}
  We note that $A^ {-1}$ exists as an operator in $\cL(H)$ by virtue of $A$ being the generator of an exponentially stable analytic semigroup. In particular, let $\rho < \omega_0$. Then $A + \rho I$ generates an analytic semigroup. By the characterization of analytic semigroups, see e.g.~\cite[Theorem 12.31]{RenardyRogers2004}, it follows that $(\lambda I - (A + \rho I))^{-1} \in \cL(H)$ for any $\lambda > 0$. Choosing $\lambda = \rho$ proves the assertion. That $A_h^ {-1}$ exists is part of Assumption~\ref{ass:disc_updated}.
\end{remark}

\begin{remark}
The inequalities~\eqref{eq:unif_analyt_0} and~\eqref{eq:unif_analyt_1} are analogous to Assumption A.1 in Section 4.1.2.1 of~\cite{LasieckaTriggiani2000}. We note that the $\omega_0$ in this assumption is not the same parameter as our $\omega_0$. The difference is that Assumption A.1 is formulated for possibly unstable $A$, while we assume that $A$ generates an exponentially stable semigroup. The former case can be converted into the latter by a shift $A \to A - \lambda I$ as described in Appendix~\ref{app:reformulation}. This is also used in~\cite{LasieckaTriggiani2000}, where the shifted operator is denoted $-\hat{A}$.
\end{remark}

\begin{remark}
  The restriction $h \le h_0$ will be enforced throughout the paper, but the choice of $h_0$ is arbitrary and it has a minor effect on the error constants. It could therefore, e.g., be chosen as the diameter of the computational domain.
\end{remark}

A main consequence of these assumptions on $A$ and $A_h$ is that we can also bound the corresponding solution operators in a similar way:

\begin{lemma}[{\cite[Proposition 4.1.2.1 (i)]{LasieckaTriggiani2000}}]\label{lemma:expAh_bound}
  Under Assumption~\ref{ass:operators} and Assumption~\ref{ass:disc_updated},
  \begin{equation*}
  \normLH{\pi_h^* \exp{t A_h}\pi_h - \exp{t A}} \leq C \exp{\omega_0t}\frac{h^2}{t}
\end{equation*}
for $t > 0$.
\end{lemma}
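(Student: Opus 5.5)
We prove the bound by the standard Laplace-transform/resolvent argument for nonsmooth-data parabolic error estimates. Throughout we write $E_h(t) = \pi_h^* \exp{t A_h}\pi_h - \exp{t A}$, so the claim is $\normLH{E_h(t)} \le C\exp{\omega_0 t} h^2/t$.

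\textbf{Reduction to $\omega_0 = 0$.} Replacing $A$ and $A_h$ by $A - \omega_0 I$ and $A_h - \omega_0 I$ multiplies both semigroups by $\exp{-\omega_0 t}$. It also preserves the analytic bounds \eqref{eq:unif_analyt_0}--\eqref{eq:unif_analyt_1}. However, it changes the inverse in \eqref{eq:elliptic_bound}. To avoid this, we work directly with a shifted contour and keep the weight $\exp{\omega_0 t}$ throughout.

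\textbf{Step 1: Dunford--Taylor representation.} By Theorem~\ref{thm:sg_bounds} and \eqref{eq:unif_analyt_0}--\eqref{eq:unif_analyt_1}, both $A$ and $A_h$ (the latter uniformly in $h$) are sectorial. Hence there is a sector $\Sigma = \{\omega_0 + z : |\arg z| < \pi/2 + \delta\}$ on which the following hold, uniformly in $h \le h_0$:
\[
\normLH{(\lambda - A)^{-1}} \le \frac{C}{|\lambda - \omega_0|}, \qquad \normLHh{(\lambda - A_h)^{-1}} \le \frac{C}{|\lambda - \omega_0|}.
\]
Write the projected discrete resolvent as $R_h(\lambda) = \pi_h^*(\lambda - A_h)^{-1}\pi_h$. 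Let $\Gamma$ be the boundary of the sector, shifted to pass at distance $1/t$ from $\omega_0$. Then
\[
E_h(t) = \frac{1}{2\pi i}\int_\Gamma \exp{\lambda t}\bigl(R_h(\lambda) - (\lambda - A)^{-1}\bigr)\,\diff{\lambda}.
\]

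\textbf{Step 2: Resolvent error identity.} Write $T = A^{-1}$ and $T_h = \pi_h^* A_h^{-1}\pi_h$, so that $\normLH{T - T_h} \le Ch^2$ by \eqref{eq:elliptic_bound}. Two algebraic facts are used:
\[
(\lambda - A)^{-1} = -T\bigl(I - \lambda (\lambda - A)^{-1}\bigr), \qquad R_h(\lambda) = -T_h\bigl(I - \lambda R_h(\lambda)\bigr).
\]
The second is checked on $H_h$, where $\pi_h^*$ is isometric, and holds trivially on $H_h^\perp$. Subtracting these two identities gives
\[
R_h(\lambda) - (\lambda - A)^{-1} = (T - T_h)\bigl(I - \lambda (\lambda - A)^{-1}\bigr) + \lambda T_h\bigl(R_h(\lambda) - (\lambda - A)^{-1}\bigr).
\]
This is an implicit relation for the resolvent error. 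To solve it, we use the companion identity in which $T_h$ multiplies from the right instead. This is Crouzeix's (Fujita--Suzuki) trick: combining the left and right versions gives
\[
R_h(\lambda) - (\lambda - A)^{-1} = \bigl(I - \lambda R_h(\lambda)\bigr)(T_h - T)\bigl(I - \lambda(\lambda - A)^{-1}\bigr)\cdot(-1) \;+\; \text{terms involving } \pi_h - I.
\]
Using $T_h(\lambda - A) = \pi_h^*(\lambda - A_h)^{-1}$-type relations, the $\pi_h - I$ terms either cancel or are controlled by $T - T_h$. The two outer factors are uniformly bounded on $\Sigma$ by the sectorial estimates. Hence
\[
\normLH{R_h(\lambda) - (\lambda - A)^{-1}} \le C h^2 .
\]

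\textbf{Step 3: Contour integration.} Inserting this bound into the integral gives
\[
\normLH{E_h(t)} \le C h^2\int_\Gamma |\exp{\lambda t}|\,|\diff{\lambda}| \le C h^2 \exp{\omega_0 t}\, t^{-1},
\]
after the substitution $\lambda = \omega_0 + z/t$.

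\textbf{Main obstacle.} The hard step is Step 2, namely establishing the uniform $O(h^2)$ resolvent error estimate with the correct factorization. The pure $(T-T_h)$ form requires care with the projection $\pi_h$, because $A_h$ is not assumed to be a Ritz projection of $A$. If the exact algebra fails, the fallback is the time-domain formulation. Define
\[
F(t) = T E_h(t) \quad\text{and}\quad G(t) = \text{its discrete analogue},
\]
and use $\ddt (t E_h(t))$ together with the variation-of-constants formula and the smoothing bounds \eqref{eq:unif_analyt_1}. This reproduces the $h^2/t$ rate exactly as in~\cite[Prop.~4.1.2.1]{LasieckaTriggiani2000}, which the lemma cites.
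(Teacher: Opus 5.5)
The paper gives no proof of this lemma; it quotes \cite[Proposition 4.1.2.1 (i)]{LasieckaTriggiani2000}. Your Dunford-integral route is the standard resolvent argument for such nonsmooth-data estimates, and Steps 1 and 3 are fine. However, Step 2, which carries all the content, is asserted rather than proved. The ``companion identity'', the unspecified ``terms involving $\pi_h - I$'' and the appeal to ``$T_h(\lambda - A) = \pi_h^*(\lambda - A_h)^{-1}$-type relations'' do not add up to an argument. No relation of that kind is available: the only link between $A$ and $A_h$ in Assumption~\ref{ass:disc_updated} is \eqref{eq:elliptic_bound}. The fallback is only a pointer back to the cited source. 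As written, the uniform bound $\|R_h(\lambda) - (\lambda - A)^{-1}\|_{\cL(H)} \le Ch^2$ is not established.

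The missing idea is that your own implicit relation can be solved directly. Write $R(\lambda) = (\lambda - A)^{-1}$, so that $(I - \lambda T_h)(R_h(\lambda) - R(\lambda)) = (T - T_h)(I - \lambda R(\lambda))$. Your second identity says $\lambda T_h R_h(\lambda) = R_h(\lambda) + T_h$, and $T_h$ commutes with $R_h(\lambda)$. Hence $(I - \lambda T_h)(I - \lambda R_h(\lambda)) = (I - \lambda R_h(\lambda))(I - \lambda T_h) = I$. It follows that, exactly,
\[
R_h(\lambda) - R(\lambda) = \bigl(I - \lambda R_h(\lambda)\bigr)(T - T_h)\bigl(I - \lambda R(\lambda)\bigr),
\]
with no $\pi_h - I$ terms, no companion identity, and no Ritz structure needed.

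There is a second, smaller gap. The outer factors are not bounded uniformly on $\Gamma$ ``by the sectorial estimates'' alone. From $\|R(\lambda)\|_{\cL(H)} \le C/|\lambda - \omega_0|$ you only get $\|\lambda R(\lambda)\|_{\cL(H)} \le C|\lambda|/|\lambda - \omega_0|$. This is of size $t$ on the arc of $\Gamma$ of radius $1/t$ around $\omega_0 \neq 0$. You additionally need the resolvents to stay bounded near $\omega_0$, uniformly in $h$, and this does hold here. Writing $\mathrm{e}^{tA_h} = -\int_t^\infty A_h \mathrm{e}^{sA_h}\,\mathrm{d}s$ and using \eqref{eq:unif_analyt_1} gives $\|\mathrm{e}^{tA_h}\|_{\cL(H_h)} \le C\mathrm{e}^{\omega_0 t}/(|\omega_0| t)$. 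Iterating with the semigroup property then yields a growth bound $\omega_1 < \omega_0$ independent of $h$, and the same argument works for $A$. Alternatively, put the vertex at some $\omega' \in (\omega_0, 0)$ and settle for $\mathrm{e}^{\omega' t}$, which is all Lemma~\ref{lemma:op_parabolic} needs.

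With both repairs, your argument is a complete, self-contained proof of the cited result. It uses only \eqref{eq:unif_analyt_0}--\eqref{eq:elliptic_bound} and not the strong convergence of $\pi_h$.
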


As in Remark~\ref{remark:omega0}, the bounds on the exponentials in~\eqref{eq:unif_analyt_0} will typically be used in the weaker form which arises from replacing $\omega_0$ by $0$, i.e., ignoring the decay when $t>0$.

We define $\cA_h : \cL(H_h) \to \cL(H_h)$ by
\begin{equation*}
  \cA_h U_h = A_h^* U_h + U_h A_h,
\end{equation*}
which induces the semigroup $\exp{t \cA_h}: \cL(H_h) \to \cL(H_h)$ given by
\begin{equation*}
  \exp{t \cA_h}U_h = \exp{t A^*_h} U_h \exp{t A_h}.
\end{equation*}
For these operators, the bounds of Assumption~\ref{ass:disc_updated} can be directly extended to yield the following analogue of Theorem~\ref{thm:sg_bounds}:
\begin{lemma}\label{lemma:unif_op_sg}
  Let Assumption~\ref{ass:operators} and~\ref{ass:disc_updated} hold. Then for any $M_h \in \cL(H_h)$,
  \begin{alignat*}{3}
    &\normLHh{\exp{t \cA_h} M_h} &&\leq C\exp{2\omega_0t} \normLHh{M_h}, &&\quad t \ge 0, \\
    &\normLHh{\cA_h \exp{t \cA_h} M_h} &&\leq \frac{C\exp{2\omega_0 t}}{t} \normLHh{M_h}, &&\quad t > 0.
  \end{alignat*}
\end{lemma}
\noindent
Furthermore, we approximate the operators $S$ and $Q$ by their projections $S_h \in \cL(H_h)$ and $Q_h \in \cL(H_h)$, defined by
\begin{equation*}
  S_h = \pi_h S \pi_h^*, \qquad Q_h = \pi_h Q \pi_h^*.
\end{equation*}
\begin{lemma}\label{lemma:ShQh_properties}
  Let Assumtion~\ref{ass:operators} hold, then the operators $S_h$ and $Q_h$ are both in $\SHhP$, and
  \begin{align*}
    \normLHh{S_h} &\le \normLH{S}, \\
    \normLHh{Q_h} &\le \normLH{Q}.
  \end{align*}  
\end{lemma}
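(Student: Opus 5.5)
The claim is proved by writing $S_h$ and $Q_h$ as Gram-type operators and using that $\pi_h^*$ is an isometry. Throughout, $\pi_h^* : H_h \to H$ is the adjoint of $\pi_h$ with respect to $\iprod{\cdot,\cdot}_{H_h}$ and $\iprod{\cdot,\cdot}_H$, and $\norm{u}_{H_h} = \norm{\pi_h^* u}_H$. I treat $S_h$ in detail; $Q_h$ is identical, with $E$ in place of $B^*$.

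\emph{Self-adjointness and non-negativity.} By Assumption~\ref{ass:operators}, $S = BB^*$, so
\begin{equation*}
  S_h = \pi_h B B^* \pi_h^* = (\pi_h B)(\pi_h B)^*,
\end{equation*}
since $(\pi_h B)^* = B^* \pi_h^*$. For $u, v \in H_h$ this gives
\begin{equation*}
  \iprod{S_h u, v}_{H_h} = \iprod{S \pi_h^* u, \pi_h^* v}_H ,
\end{equation*}
and by the self-adjointness of $S$ the right-hand side equals $\iprod{\pi_h^* u, S\pi_h^* v}_H = \iprod{u, S_h v}_{H_h}$. Hence $S_h \in \SHh$. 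Taking $v = u$,
\begin{equation*}
  \iprod{S_h u, u}_{H_h} = \norm{B^* \pi_h^* u}_U^2 \ge 0,
\end{equation*}
so $S_h \in \SHhP$. Boundedness is immediate, since $H_h$ is finite dimensional (and it also follows from the norm estimate below).

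\emph{Norm bound.} We have $\normLHh{S_h} \le \norm{\pi_h}_{\cL(H,H_h)} \normLH{S} \norm{\pi_h^*}_{\cL(H_h,H)}$. The injection $\pi_h^*$ is an isometry by the definition of the norm on $H_h$, so $\norm{\pi_h^*} = 1$. Its adjoint $\pi_h$ therefore also has norm $1$; concretely, $\norm{\pi_h x}_{H_h} = \norm{\pi_h^*\pi_h x}_H \le \norm{x}_H$, because $\pi_h^*\pi_h$ is the orthogonal projection on $H$. Combining these gives $\normLHh{S_h} \le \normLH{S}$.

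The only point requiring care is consistency of the identification: $\pi_h^*$ must be the Hilbert adjoint of $\pi_h$, so that $\pi_h^*\pi_h$ is the orthogonal projection in $H$. This is exactly the setup fixed in Section~\ref{sec:spatial}, so no step is a real obstacle.
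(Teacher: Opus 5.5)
Your proof is correct and follows essentially the paper's argument: you move the quadratic form to $H$ via $\iprod{S_h u,u}_{H_h} = \iprod{S\pi_h^* u, \pi_h^* u}_H$ to get membership in $\SHhP$, and you use $\norm{\pi_h}_{\cL(H,H_h)} = \norm{\pi_h^*}_{\cL(H_h,H)} = 1$ for the norm bounds. The only differences are minor: you obtain non-negativity from the factorization $S = BB^*$ (as $\norm{B^*\pi_h^* u}_U^2$) and check self-adjointness explicitly, whereas the paper uses the non-negativity of $S$ directly.
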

\begin{proof}
The first assertion follows from the fact that
\begin{equation*}
  \iprod{\pi_h S \pi^*_h x, x}_H = \iprod{S \pi^*_h x, \pi^*_h x}_H \text{ for all } x \in H_h,
\end{equation*}
and similarly for $Q_h$. The second follows from $\norm{\pi_h}_{\cL(H, H_h)} = 1$.  
\end{proof}

We will then consider the (spatially) semi-discretized Riccati equation given by
\begin{equation}
  \label{eq:DREdisc_P}
  \begin{split}
    &\dot{P}_h(t) = \cA_h P_h(t) + Q_h - P_h(t) S_h P_h(t), \quad t \in (0,T], \\
    &P_h(0) = \pi_h P_0 \pi^*_h,
  \end{split}
\end{equation}
for $P_h(t) \in \SHhP$. 
For this problem there exist several convergence results. As far as the authors are aware, the following is the most sharp:
\begin{theorem}[{\cite[Theorem 3.1]{KrollerKunisch1991}}]
  \label{thm:KrollerKunish}
  Let Assumptions~\ref{ass:operators} and~\ref{ass:disc_updated} hold. Then the 
  spatial error is bounded by
  \begin{equation*}
    \normLH{P(t) - \pi_h^* P_h(t)\pi_h } \leq Ch^2 ( t^{-1} + \vert \log h \vert).
  \end{equation*}
\end{theorem}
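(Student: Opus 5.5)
We will prove the bound by comparing the two mild (variation-of-constants) formulations and closing with a Gronwall-type argument, using the a priori boundedness of both solutions. Let $E_h(t) = P(t) - \pi_h^* P_h(t)\pi_h$. We write
\begin{equation*}
  P(t) = \exp{t\cA}P_0 + \int_0^t \exp{(t-s)\cA}\bigl(Q - P(s)SP(s)\bigr)\ds,
\end{equation*}
and the analogous formula for $P_h$ with $\cA_h$, $Q_h$, $S_h$ and initial value $\pi_h P_0\pi_h^*$. We also use that both $\maxnorm{P}$ and $\sup_h\maxnorm{P_h}$ are bounded. This follows from the standard LQR comparison $0 \le P(t) \le \exp{t\cA}P_0 + \int_0^t \exp{s\cA}Q\ds$ together with Theorem~\ref{thm:sg_bounds}, Lemma~\ref{lemma:unif_op_sg} and Lemma~\ref{lemma:ShQh_properties}.

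Extended to $H$, the difference splits into three contributions.

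\textbf{Initial term.} The first contribution is $\exp{tA^*}P_0\exp{tA} - \pi_h^*\exp{tA_h^*}\pi_h P_0 \pi_h^*\exp{tA_h}\pi_h$, where we used $\pi_h^*\pi_h\pi_h^* = \pi_h^*$. We add and subtract a mixed term. Lemma~\ref{lemma:expAh_bound}, applied to $\exp{tA}$ and to its adjoint, together with the uniform bound~\eqref{eq:unif_analyt_0}, gives $Ch^2t^{-1}\normLH{P_0}$.

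\textbf{Forcing term.} The second contribution is $\int_0^t [\exp{(t-s)\cA}Q - \pi_h^*\exp{(t-s)\cA_h}Q_h\pi_h]\ds$. A naive use of Lemma~\ref{lemma:expAh_bound} inside the integral gives the non-integrable factor $h^2/(t-s)$. Instead we split the integral at $t-s = h^2$.
\begin{itemize}
  \item On $(t-h^2,t)$ each term is uniformly bounded, so this piece contributes $Ch^2$.
  \item On the remaining part we use the $h^2/(t-s)$ bound, which yields $Ch^2(1+|\log h| + \log T)$.
\end{itemize}
This is the source of the $|\log h|$ factor. Exponential stability ($\omega_0<0$) keeps the constant independent of $T$ if desired.

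\textbf{Nonlinear term.} The third contribution is
\begin{equation*}
  \int_0^t \bigl[\exp{(t-s)\cA}(PSP)(s) - \pi_h^*\exp{(t-s)\cA_h}(P_hS_hP_h)(s)\pi_h\bigr]\ds.
\end{equation*}
We write it as
\begin{equation*}
  \int_0^t \bigl[\exp{(t-s)\cA} - \pi_h^*\exp{(t-s)\cA_h}\pi_h(\cdot)\pi_h^*\pi_h\bigr](PSP)(s)\ds
\end{equation*}
plus a term involving $PSP - \pi_h^*P_hS_hP_h\pi_h$. For the latter we use $\pi_h^*S_h\pi_h = \pi_h^*\pi_h S\pi_h^*\pi_h$ and expand $PSP - \tilde P S\tilde P = E_hSP + \tilde P S E_h$ with $\tilde P = \pi_h^*P_h\pi_h$. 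The remaining piece $\pi_h^*\pi_h S\pi_h^*\pi_h$ is absorbed because $\tilde P = \pi_h^*\pi_h\tilde P\pi_h^*\pi_h$. This term is then bounded by $C\int_0^t\normLH{E_h(s)}\ds$. The semigroup-difference term is handled exactly like the forcing term, since $PSP$ is uniformly bounded, and contributes $Ch^2(1+|\log h|)$.

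Collecting the three contributions gives
\begin{equation*}
  \normLH{E_h(t)} \le Ch^2(t^{-1}+1+|\log h|) + C\int_0^t\normLH{E_h(s)}\ds.
\end{equation*}
The main obstacle is that the forcing function contains the non-integrable singularity $t^{-1}$, so the standard Gronwall lemma does not apply directly. We handle this by subtracting the explicit initial-term contribution first. Set
\begin{equation*}
  F_h(t) = E_h(t) - \bigl(\exp{t\cA}P_0 - \pi_h^*\exp{t\cA_h}(\pi_hP_0\pi_h^*)\pi_h\bigr).
\end{equation*}
Then $F_h$ satisfies the same integral inequality with the integral term still involving $E_h$. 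The product $E_hSP$ involves the initial difference, which is bounded by $C\min(1,h^2/s)$. Integrating this gives
\begin{equation*}
  \int_0^t \min(1,h^2/s)\ds \le Ch^2(1+|\log h|),
\end{equation*}
which is again of logarithmic type. Hence
\begin{equation*}
  \normLH{F_h(t)} \le Ch^2(1+|\log h|) + C\int_0^t\normLH{F_h(s)}\ds,
\end{equation*}
and the classical Gronwall inequality gives $\normLH{F_h(t)}\le Ch^2(1+|\log h|)$. Adding back the initial term yields the claim. Alternatively, one can cite \cite[Theorem 3.1]{KrollerKunisch1991} directly and verify only that our Assumption~\ref{ass:disc_updated} implies their hypotheses, which are of the Lasiecka–Triggiani type.
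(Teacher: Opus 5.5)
Your argument is correct, but it takes a genuinely different route from the paper. The paper gives no proof and simply imports \cite[Theorem 3.1]{KrollerKunisch1991}, whose argument (as the remark after the theorem notes) goes through the LQR/optimal-control representation of $P$. You instead compare the two mild formulations directly. The only nontrivial discretization input is the nonsmooth-data estimate of Lemma~\ref{lemma:expAh_bound}, applied to both factors of $\exp{sA^*}M\exp{sA}-T_h(s)^*MT_h(s)$ with $T_h(s)=\pi_h^*\exp{sA_h}\pi_h$. The forcing and nonlinear contributions are controlled by the same splitting at $s=h^2$ as in the proof of Lemma~\ref{lemma:op_parabolic}. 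Positivity gives the uniform a priori bounds, and Gr\"onwall closes the argument as in Lemma~\ref{lemma:init_cond_diff_bound}. One small correction: the homogeneous error is bounded by $C\min(1,h^2/t)$, which is integrable, so the $t^{-1}$ is no real obstacle to Gr\"onwall; your subtraction of the homogeneous part is just one way of writing this. Citing Kr\"oller--Kunisch buys brevity and access to refinements in that line of work, e.g.\ removing the logarithm under further assumptions \cite{Kroller1986}. Your route buys transparency: it shows that the singularity comes solely from $\exp{t\cA}P_0-\pi_h^*\exp{t\cA_h}(\pi_hP_0\pi_h^*)\pi_h$. Under Assumption~\ref{ass:p_0_reg} you may write $\exp{t\cA}P_0=-\int_t^\infty\exp{s\cA}(\cA P_0)\,\ds$ and $\exp{t\cA_h}\Pt_{0,h}=-\int_t^\infty\exp{s\cA_h}(\pi_h\cA P_0\pi_h^*)\,\ds$. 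Estimating their difference exactly as in Lemma~\ref{lemma:op_parabolic} and adding Lemma~\ref{lemma:smooth_init_bound} bounds the homogeneous error by $C\normLH{\cA P_0}h^2(1+|\log h|)$ uniformly in $t$. Your proof would therefore remove the $h^2t^{-1}$ term from Theorem~\ref{theorem:main}, a question the paper leaves open.
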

\begin{remark}
  While the term $t^{-1}$ is unfortunate, it reflects the typical behaviour of the exact solution $P$ of~\eqref{eq:DRE_alt} near $t = 0$. It is unclear whether this term can be removed by imposing regularity constraints on $P_0$ (such as Assumption~\ref{ass:p_0_reg}), because the proof strategy in~\cite{KrollerKunisch1991} is based on the connection between Riccati equations and LQR problems rather than the direct approach considered here.
  We note that the singularity is avoided in~\cite{Cheung2025}, but at the price of working in the setting of compact linear operators. This severely restricts the applicability of the results.
  
  The term $\log h$ is removable under further assumptions on $P_0$, $U$ and the spaces $H_h$, see~\cite{Kroller1986}. But since its influence is marginal, we chose to include it in order to improve both the applicability of the results and the readability of the analysis.
\end{remark}
The approximation properties have been given in terms of $A$ and $A_h$, but we are 
interested in working with the operators $\cA$ and $\cA_h$ instead. The following
two results show that the bound given by~\eqref{eq:elliptic_bound} can be
extended to the operators $\cA$ and $\cA_h$. By the idea in~\cite[Theorem 4.1.3]{CurtainZwart2020}, we obtain the first result.
\begin{lemma} \label{lemma:cA_invertible}
  Let Assumptions~\ref{ass:operators} and~\ref{ass:disc_updated}. The operators $\cA : \domain{\cA} \to \SH$ and $\cA_h: \SHh \to \SHh$ are both invertible, independent of $h$.
  Furthermore, $\cA^{-1}: \SH \to \SH$ and $\cA_h^{-1}: \SHh \to \SHh$ are bounded.
\end{lemma}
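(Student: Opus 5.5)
The plan is to write down the inverses explicitly as Laplace-type integrals of the operator semigroups, in the spirit of \cite[Theorem 4.1.3]{CurtainZwart2020}. For $M \in \SH$ define
\begin{equation*}
  \mathcal{R} M = -\int_0^\infty \exp{t\cA} M \,\diff{t}, \qquad \text{where } (\mathcal{R}M x, y) = -\int_0^\infty \iprod{M \exp{tA} x, \exp{tA} y}\,\diff{t}.
\end{equation*}
The integral is understood weakly (or pointwise, $x \mapsto \int_0^\infty \exp{tA^*}M\exp{tA}x\,\diff{t}$). By Theorem~\ref{thm:sg_bounds} with $\omega_0<0$, the integrand is bounded by $C\exp{2\omega_0 t}\normLH{M}$. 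Hence $\mathcal{R}M$ is well defined and self-adjoint, and $\normLH{\mathcal{R}M} \le C\normLH{M}/(2|\omega_0|)$. The same construction with $\exp{t\cA_h}$ and Lemma~\ref{lemma:unif_op_sg} gives $\mathcal{R}_h$ on $\SHh$ with $\normLHh{\mathcal{R}_h M_h} \le C \normLHh{M_h}/(2|\omega_0|)$. Since $C$ and $\omega_0$ do not depend on $h$, this bound is uniform in $h \le h_0$.

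Next I show $\mathcal{R} = \cA^{-1}$. For $P = \mathcal{R}M$ and $x,y \in \domain{A}$, I compute
\begin{equation*}
  \tilde\phi_P(x,y) = -\int_0^\infty \Bigl( \iprod{M\exp{tA}x, \exp{tA}Ay} + \iprod{M\exp{tA}Ax,\exp{tA}y}\Bigr)\diff{t}.
\end{equation*}
The integrand equals $\ddt \iprod{M\exp{tA}x,\exp{tA}y}$, which is valid since $x,y\in\domain{A}$. Integrating, and using that the boundary term at infinity vanishes by exponential stability, gives $\tilde\phi_P(x,y) = \iprod{Mx,y}$. So $|\tilde\phi_P(x,y)|\le \normLH{M}\norm{x}\norm{y}$, hence $P\in\domain{\cA}$, and by density of $\domain{A}$ we get $\cA P = M$. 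Thus $\cA$ is surjective and $\mathcal{R}$ is a right inverse.

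For injectivity, suppose $P\in\domain{\cA}$ with $\cA P = 0$. Then for $x,y\in\domain{A}$ the function $t\mapsto \iprod{P\exp{tA}x,\exp{tA}y}$ has derivative $\phi_P(\exp{tA}x,\exp{tA}y) = 0$. It therefore equals its value at $t=0$ and also tends to $0$ as $t\to\infty$, so $P=0$. This step is the main technical obstacle, because differentiating requires care with the weak definition of $\cA$. I would handle it by using the identity $A^*Px + PAx = \cA(P)x$ for $x\in\domain{A}$, or equivalently $\ddt \exp{t\cA}P = \cA\exp{t\cA}P$, together with the pointwise continuity at $t=0$. With both directions in hand, $\cA^{-1} = \mathcal{R}$ is bounded.

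The discrete case is easier, since $A_h$ is a bounded operator and all differentiations are classical. The same two computations show $\cA_h\mathcal{R}_h = \Id$ and $\cA_h$ injective, and hence $\cA_h^{-1} = \mathcal{R}_h$. The bound on $\cA_h^{-1}$ is independent of $h$ because it only uses the constants in Lemma~\ref{lemma:unif_op_sg}.
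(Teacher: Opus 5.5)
Your proposal is correct and follows essentially the same route as the paper. Both use the integral $\int_0^\infty \exp{sA^*}M\exp{sA}\,\ds$ as the explicit inverse (the paper uses the opposite sign and gets $\cA P=-M$), the derivative identity for $\iprod{M\exp{sA}x,\exp{sA}y}$ on $\domain{A}$ together with exponential decay for both surjectivity and injectivity, and the same argument for $\cA_h$ with the $h$-independent bound \eqref{eq:unif_analyt_0}. The injectivity step you flag as the main obstacle is in fact immediate, as in the paper: for $x,y\in\domain{A}$ the map $t\mapsto\exp{tA}x$ is $C^1$, so the product rule gives the derivative $\tilde\phi_P(\exp{tA}x,\exp{tA}y)$ directly, without the identities involving $\exp{t\cA}$.
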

\begin{proof}
  Consider first $\cA$. For an arbitrary $M \in \SH$ we define $P$ by
  \begin{equation*}
    Pz = \int_0^{\infty}{\exp{sA^*} M \exp{sA} z \ds}
  \end{equation*}
  for any $z \in H$.
  This is well-defined, since by Theorem~\ref{thm:sg_bounds} the norm of the integrand is bounded by $C^2\exp{2\omega_0 s} \norm{M} \norm{z}$. Since $\exp{sA^*} M \exp{sA}$ is also symmetric, $P \in \SH$.

  Now take $x, y \in \domain{A}$. We have
  \begin{align*}
    \iprod{Px, Ay} + \iprod{Ax, Py}
    &= \int_0^{\infty}{\iprod{\exp{sA^*} M \exp{sA} x, Ay} \,\ds} + \int_0^{\infty}{\iprod{Ax, \exp{sA^*} M \exp{sA} y} \,\ds}\\
    &= \int_0^{\infty}{\iprod{\exp{sA}x, M \exp{sA} Ay} \,\ds} +  \int_0^{\infty}{\iprod{\exp{sA}Ax, M \exp{sA} y} \,\ds}\\
    &= \int_0^{\infty}{\dds \iprod{\exp{sA}x, M \exp{sA} y} \,\ds}.
  \end{align*}
  Since the integrand satisfies
  \begin{equation*}
    \norm[\bigg]{\dds \iprod{\exp{sA}x, M \exp{sA} y}} \le 2C^2\exp{2\omega_0 s} \norm{M} \norm{Ax} \norm{Ay} \norm{x} \norm{y}
  \end{equation*}
  it is integrable on $[0, \infty)$, and we get
  \begin{equation*}
    \iprod{Px, Ay} + \iprod{Ax, Py} = \Bigl[ \iprod{\exp{sA}x, M \exp{sA} y} \Bigr]_{0}^{\infty}
                                    = - \iprod{x, My},
  \end{equation*}
  since $\exp{sA}z \to 0$ as $s \to \infty$ for any $z$. Hence
  \begin{equation*}
    \varphi_P(x, y) = - \iprod{Mx, y}
  \end{equation*}
  is continuous, so $P \in \domain{\cA}$ and $\cA P = -M$. Since the choice of $M$ was arbitrary, $\cA$ is surjective.
  To see that $\cA$ is also injective, take $P_1, P_2 \in \domain{\cA}$ and assume that $\cA(P_1) = \cA(P_2)$, i.e.
  \begin{equation*}
    \phi_{P_1}(x,y) - \phi_{P_2}(x,y) = 0 \quad \text{for all }x,y \in H.
  \end{equation*}
This implies that
  \begin{equation*}
    \iprod{Ax, (P_1 - P_2) y} + \iprod{(P_1 - P_2)x,  Ay} = 0 \quad \text{for all }x,y \in \domain{A}.
  \end{equation*}
  If $x_0, y_0 \in \domain{A}$ then both the particular choices $x = \exp{t A} x_0$ and $y = \exp{tA}y_0$ belong to $\domain{A}$ for any $t \ge 0$, so we find that
  \begin{equation*}
    \begin{split}
    & 0 = \iprod{A\exp{t A} x_0, (P_1 - P_2) \exp{tA}y_0} + \iprod{(P_1 - P_2)\exp{t A} x_0,  A\exp{tA}y_0} \\
    & \quad = \dds{\iprod{\exp{t A} x_0, (P_1 - P_2) \exp{tA}y_0}}.
    \end{split}
  \end{equation*}
  Thus $t \mapsto \iprod{\exp{t A} x_0, (P_1 - P_2) \exp{tA}y_0}$ is constant. But both $\exp{t A} x_0$ and $\exp{t A} y_0$ tend to $0$ as $t \to \infty$, since $\exp{tA}$ is exponentially stable. Thus the constant must be $0$.
  In particular with $t = 0$,
  \begin{equation*}
    0 = \iprod{x_0, (P_1 - P_2) y_0}, \quad \text{for all } x_0, y_0 \in \domain{A}.
  \end{equation*}
  Since $\domain{A}$ is dense in $H$, this means that $P_1 = P_2$, and thus $\cA$ is invertible.
  That $\cA^{-1}$ is bounded follows directly from
  \begin{equation*}\begin{split}
    \sup_{\normLH{M} = 1} \normLH{\cA^{-1}(M)} 
    &= \sup_{\normLH{M} = 1} \normLH[\bigg]{ -  \int_0^{\infty}{\exp{sA^*} M \exp{sA} \,\ds}} \\
    &\leq \sup_{\normLH{M} = 1}  \int_0^{\infty}{C^2 \exp{2\omega_0 s}\normLH{M} \,\ds}.
  \end{split}
  \end{equation*}

 By Assumption~\ref{ass:disc_updated}, specifically~\eqref{eq:unif_analyt_0}, the above arguments are also valid if $\cA$ and $H$ are replaced by $\cA_h$ and $H_h$.
\end{proof}

\begin{lemma}\label{lemma:op_parabolic}
  Let Assumptions~\ref{ass:operators} and~\ref{ass:disc_updated}. Then
  there exists a constant $C$, such that
  \begin{equation*}
    \normLH{\cA^{-1}(M) - \pi_h^* \cA_h^{-1} (\pi_h M \pi_h^*) \pi_h} \le C \normLH{M} h^2 (1 + |\log h|).
  \end{equation*}
\end{lemma}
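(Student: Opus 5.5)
The natural route is to use the integral representations from the proof of Lemma~\ref{lemma:cA_invertible}. For $M \in \SH$ we have
\begin{equation*}
  \cA^{-1}(M) = -\int_0^\infty \exp{sA^*} M \exp{sA} \ds,
\end{equation*}
and, by the same argument in $H_h$,
\begin{equation*}
  \pi_h^* \cA_h^{-1}(\pi_h M \pi_h^*) \pi_h = -\int_0^\infty \pi_h^*\exp{sA_h^*}\pi_h M \pi_h^*\exp{sA_h}\pi_h \ds.
\end{equation*}
Write $E_h(s) = \pi_h^*\exp{sA_h}\pi_h$ and $E(s) = \exp{sA}$. Since $\pi_h^*\exp{sA_h^*}\pi_h = E_h(s)^*$, the difference of the integrands is
\begin{equation*}
  E(s)^* M E(s) - E_h(s)^* M E_h(s) = (E(s)-E_h(s))^* M E(s) + E_h(s)^* M (E(s)-E_h(s)).
\end{equation*}
Each term is bounded in $\cL(H)$ by $C\normLH{M}\,\exp{\omega_0 s}\normLH{E(s)-E_h(s)}$, using Theorem~\ref{thm:sg_bounds} and~\eqref{eq:unif_analyt_0}.

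Next I split the integral at $s = h^2$ and bound $\normLH{E(s)-E_h(s)}$ differently on each piece.

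On $[0,h^2]$ I use the trivial bound $\normLH{E(s)-E_h(s)} \le C$ from Theorem~\ref{thm:sg_bounds} and~\eqref{eq:unif_analyt_0}. This piece then contributes $C\normLH{M}h^2$.

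On $[h^2,\infty)$ I use Lemma~\ref{lemma:expAh_bound}, $\normLH{E(s)-E_h(s)} \le C\exp{\omega_0 s}h^2/s$. This piece contributes
\begin{equation*}
  C\normLH{M}h^2\int_{h^2}^\infty \frac{\exp{2\omega_0 s}}{s}\ds.
\end{equation*}
Splitting this integral further at $s=1$ (assuming $h\le h_0\le 1$, or absorbing $h_0$ into $C$), the part over $[h^2,1]$ is at most $\int_{h^2}^1 s^{-1}\ds = 2|\log h|$. The tail over $[1,\infty)$ is bounded by $\int_1^\infty \exp{2\omega_0 s}\ds \le C$, since $\omega_0<0$; this is precisely where exponential stability is needed, as Remark~\ref{remark:omega0} anticipates. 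Adding the pieces gives $C\normLH{M}h^2(1+|\log h|)$.

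The main point of care is justifying the integral representation of $\cA_h^{-1}(\pi_h M\pi_h^*)$ lifted to $H$. The proof of Lemma~\ref{lemma:cA_invertible} transfers to $H_h$, and conjugating by $\pi_h^*,\pi_h$ commutes with the Bochner/strong integral. One also needs $(\pi_h^*\exp{sA_h}\pi_h)^* = \pi_h^*\exp{sA_h^*}\pi_h$, which holds because the adjoint of $\pi_h^*$ is $\pi_h$.

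The integrals should be read strongly, i.e.\ applied to $z\in H$, with the operator-norm bounds obtained pointwise in $z$ before taking the supremum. If $M$ is only in $\cL(H)$ rather than $\SH$, the same computation applies verbatim.
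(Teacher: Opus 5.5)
Your proposal is correct and follows essentially the same route as the paper: the integral representations from the proof of Lemma~\ref{lemma:cA_invertible}, the same two-term splitting of the integrand, and a split at $s=h^2$. On $[0,h^2]$ you use the trivial bound, on $[h^2,\infty)$ you use Lemma~\ref{lemma:expAh_bound}, and a further split at $s=1$ produces the $|\log h|$ term. The paper uses tacitly your explicit observation that $\pi_h^*\exp{sA_h^*}\pi_h = (\pi_h^*\exp{sA_h}\pi_h)^*$, which is what lets Lemma~\ref{lemma:expAh_bound} also control the adjoint difference.
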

\begin{proof}
  By the proof of Lemma~\ref{lemma:cA_invertible} we have that for any $z \in H$
  \begin{equation*}
    \cA^{-1}(M)z = -\int_0^{\infty}{\exp{sA^*} M \exp{sA}z \,\ds},
  \end{equation*}
  and similarly,
  \begin{equation*}
    \pi_h^* \cA_h^{-1}(\pi_h M \pi_h^*) \pi_h z = -\int_0^{\infty}{\pi_h^* \exp{sA_h^*} \pi_h M \pi_h^* \exp{sA_h} \pi_h z \,\ds}.
  \end{equation*}
  Thus
  \begin{align*}
    &\cA^{-1}(M)z - \pi_h^* \cA_h^{-1}(\pi_h M \pi_h^*) \pi_h z \\
    &= -\int_0^{\infty}{\Big(\exp{sA^*} - \pi_h^* \exp{sA_h^*} \pi_h\Big) M \exp{sA}z + \pi_h^* \exp{sA_h^*} \pi_h M \Big(\exp{sA}z - \pi_h^* \exp{sA_h} \pi_hz \Big) \,\ds} \\
    &=: \int_0^{\infty}{I(s)z \,\ds}.
  \end{align*}
  We split the integral into two parts, $I_1(z) = \int_0^{h^2}{I(s)z \,\ds}$ and $I_2(z) = \int_{h^2}^{\infty}{I(s)z \,\ds}$. By Theorem~\ref{thm:sg_bounds} and Assumption~\ref{ass:disc_updated} both exponentials are uniformly bounded, so $\norm{I_1(z)} \le Ch^2\normLH{M}\norm{z}$.
  By Lemma~\ref{lemma:expAh_bound}, we further get
    \begin{align*}
      \norm{I_2(z)} &\le \int_{h^2}^{\infty}{ C\exp{\omega_0s}\frac{h^2}{s} \norm{M \exp{sA}z} + \normLH{\pi_h^* \exp{sA_h^*} \pi_h M}  C\exp{\omega_0s}\frac{h^2}{s} \norm{z}\,\ds}  \\
                    &\le Ch^2\normLH{M}\norm{z}\int_{h^2}^{\infty}{ \exp{\omega_0s} \frac{1}{s} \,\ds} \\
                    &\le Ch^2\normLH{M}\norm{z} \Big ( \int_{h^2}^{1}{ \frac{1}{s} \,\ds}  
                      +\int_{1}^{\infty}{ \exp{\omega_0s} \,\ds} \Big )\\
    \end{align*}
    so that $\norm{I_2(z)} \le Ch^2(1 + |\log h|)\normLH{M}\norm{z}$. Combining the bounds on $I_1$ and $I_2$ and taking the supremum over $z$ with $\norm{z} = 1$ proves the result.
\end{proof}

\subsection{Full discretization}\label{sec:temporal}
In order to arrive at a full discretization of the DRE, we now discretize~\eqref{eq:DREdisc_P} in time using the Lie operator splitting scheme. We formulate it using the operators $F_h, G_h: \cL(H_h) \to \cL(H_h)$, which are defined by
\begin{equation*}
  \begin{split}
    &F_h M_h= \cA_h M_h + Q_h \quad \text{and} \\
    &G_h M_h = - M_h S_h M_h.
  \end{split}
\end{equation*}
These are analogous to the operators $F$ and $G$ in~\cite{HansenStillfjordAaberg2026} for the spatially non-discretized case.
They give rise to the subproblems
\begin{align*}
  \dot{M}_h &= F_h M_h , \quad M_h(0) = M_{0,h}, \\
  \dot{M}_h &= G_h M_h, \quad M_h(0) = M_{0,h},
\end{align*}
whose solutions we denote by
\begin{equation*}
  \exp{t F_h} M_{0,h} \quad \text{and} \quad \exp{t G_h} M_{0,h},
\end{equation*}
respectively, for any $M_{0,h} \in \Sigma^+(H_h)$.

The following auxiliary lemmas correspond to Lemma 2.11 and Lemma 2.12 in~\cite{HansenStillfjordAaberg2026} and follow directly from applying these results to~\eqref{eq:DREdisc_P} and using the uniform bounds on $S_h$ and $Q_h$ in Lemma~\ref{lemma:ShQh_properties}.
\begin{lemma}\label{lemma:unif_nlinflow}
  Let Assumption~\ref{ass:operators} hold and $M_{0,h} \in \Sigma^+(H_h)$. Then
  $\exp{tG_h} M_{0,h} = \bigl( I + t S_h M_{0,h} \bigr)^{-1} M_{0,h}$, defined on $[0,T]$, fulfills
  \begin{equation*}
    \maxnorm{t\mapsto\exp{tG_h} M_{0,h}} \leq \bigl( 1 + T \rho \normLH{S} \bigr)  \rho,
  \end{equation*}
  for all $M_{0,h}$ such that $\normLHh{M_{0,h}} \leq \rho$. Additionally,
  the derivatives of this function fulfill
  \begin{equation*}
    \maxnorm[\bigg]{\ddjx{j}{t} \bigg(t\mapsto\exp{t G_h} M_{0,h} \bigg) } \leq j! \normLH{S}^j \bigl( 1 + T \rho \normLH{S}\bigr)^{j+1} \rho^{j+1},
  \end{equation*}
  for $j= 1, \ldots$.
\end{lemma}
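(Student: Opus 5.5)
We verify the explicit formula first, then obtain the bounds from a geometric-series expansion. Set $X(t) = ( I + t S_h M_{0,h} )^{-1} M_{0,h}$. The operator $I + tS_hM_{0,h}$ is invertible for $t \ge 0$. Indeed, $S_h, M_{0,h} \in \SHhP$ by Lemma~\ref{lemma:ShQh_properties}, so $S_h M_{0,h}$ is similar to the non-negative operator $M_{0,h}^{1/2} S_h M_{0,h}^{1/2}$ on its nonzero spectrum. Hence $\sigma(S_hM_{0,h}) \subset [0,\infty)$, and $-1/t$ is not an eigenvalue; in finite dimensions this gives invertibility.

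The same identity $M_{0,h}(I + tS_hM_{0,h})^{-1} = (I+tM_{0,h}S_h)^{-1}M_{0,h}$ shows that $X(t)$ is self-adjoint. Differentiating $(I+tS_hM_{0,h})X(t) = M_{0,h}$ gives $S_hM_{0,h}X + (I+tS_hM_{0,h})\dot X = 0$, so
\begin{equation*}
  \dot X = -(I+tS_hM_{0,h})^{-1}S_hM_{0,h}X = -X S_h X,
\end{equation*}
using the self-adjointness. Together with $X(0)=M_{0,h}$ and uniqueness for this locally Lipschitz ODE on $\cL(H_h)$, this identifies $X(t)=\exp{tG_h}M_{0,h}$.

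For the sup bound, write $X(t) = M_{0,h}^{1/2}(I + tM_{0,h}^{1/2}S_hM_{0,h}^{1/2})^{-1}M_{0,h}^{1/2}$. The middle factor is the inverse of a self-adjoint operator that is at least $I$, so its norm is at most $1$. This gives $\normLHh{X(t)} \le \rho \le (1+T\rho\normLH{S})\rho$, which is even stronger than claimed. Alternatively, the claimed form follows directly from the crude resolvent bound below.

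For the derivatives, I would expand $X$ in powers of $t$. Formally,
\begin{equation*}
  X(t) = \sum_{k\ge0} (-t)^k (M_{0,h}S_h)^k M_{0,h},
\end{equation*}
and by induction on $j$ one gets
\begin{equation*}
  \ddjx{j}{t} X(t) = (-1)^j j!\, (I+tM_{0,h}S_h)^{-(j+1)}(M_{0,h}S_h)^j M_{0,h}.
\end{equation*}
The induction step differentiates $(I+tK)^{-m}$ into $-m(I+tK)^{-m-1}K$, using that $K=M_{0,h}S_h$ commutes with its resolvent. This derivation is exact and does not need convergence of the series. With $\normLHh{M_{0,h}S_h}\le \rho\normLH{S}$ and the bound on $(I+tK)^{-1}$, we obtain the claim. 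The bound on $(I+tK)^{-1}$ itself follows from the identity $(I+tK)^{-1} = I - tM_{0,h}^{1/2}(I+tM_{0,h}^{1/2}S_hM_{0,h}^{1/2})^{-1}M_{0,h}^{1/2}S_h$, whose norm is at most $1+T\rho\normLH{S}$.

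The main obstacle is precisely this uniform bound on $(I+tS_hM_{0,h})^{-1}$ and its transpose, since the operator is not self-adjoint. The factorization through $M_{0,h}^{1/2}$ resolves it, and every constant depends only on $\rho$, $T$ and $\normLH{S}$ via Lemma~\ref{lemma:ShQh_properties}. The bounds are therefore independent of $h$, exactly mirroring Lemma 2.11 of~\cite{HansenStillfjordAaberg2026}.
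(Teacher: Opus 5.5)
\textbf{Gap in the identification step.} The step where you identify $X$ with the flow fails, and the cause is a typo in the statement that you carried over. Your operator $X(t)=(I+tS_hM_{0,h})^{-1}M_{0,h}$ is in general \emph{not} self-adjoint, so it is not $\exp{tG_h}M_{0,h}$.

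The push-through identity you quote, $M_{0,h}(I+tS_hM_{0,h})^{-1}=(I+tM_{0,h}S_h)^{-1}M_{0,h}$, says that $M_{0,h}(I+tS_hM_{0,h})^{-1}$ equals its own adjoint. It says nothing about your $X$, whose adjoint is $M_{0,h}(I+tM_{0,h}S_h)^{-1}$. A concrete counterexample on $\C^2$: take $M_{0,h}=\begin{pmatrix}1&0\\0&0\end{pmatrix}$ and $S_h=\begin{pmatrix}1&1\\1&1\end{pmatrix}$. Then $(I+tS_hM_{0,h})^{-1}M_{0,h}=\frac{1}{1+t}\begin{pmatrix}1&0\\-t&0\end{pmatrix}$, whereas the Riccati flow is $\frac{1}{1+t}M_{0,h}$. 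Consequently your displayed equality $-(I+tS_hM_{0,h})^{-1}S_hM_{0,h}X=-XS_hX$ is false for this $X$. Self-adjointness cannot rescue it.

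\textbf{The fix.} The correct closed form is $(I+tM_{0,h}S_h)^{-1}M_{0,h}=M_{0,h}(I+tS_hM_{0,h})^{-1}$. This is also what the paper itself writes in the appendix: $\bigl(I+\frac{\mathrm{e}^{2\lambda t}-1}{2\lambda}P_0S\bigr)^{-1}P_0$. For this operator the identification is immediate. Differentiating $(I+tM_{0,h}S_h)X=M_{0,h}$ gives $\dot X=-(I+tM_{0,h}S_h)^{-1}M_{0,h}S_hX=-XS_hX$, with no appeal to self-adjointness. Self-adjointness and non-negativity then follow from your factorization $M_{0,h}^{1/2}(I+tM_{0,h}^{1/2}S_hM_{0,h}^{1/2})^{-1}M_{0,h}^{1/2}$.

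In fact, everything after your first paragraph is already written for this corrected operator, not for the $X$ you defined. This includes the factorization through $M_{0,h}^{1/2}$, the series in $(M_{0,h}S_h)^kM_{0,h}$, the derivative formula with $(I+tM_{0,h}S_h)^{-(j+1)}$, and the resolvent bound via $I-tM_{0,h}^{1/2}(I+tM_{0,h}^{1/2}S_hM_{0,h}^{1/2})^{-1}M_{0,h}^{1/2}S_h$. Those computations are correct. They also supply invertibility of $I+tM_{0,h}S_h$ through the explicit inverse. So once you fix the definition of $X$ (and note that the statement's formula has its factors swapped), your argument yields $\normLHh{X(t)}\le\rho$ and exactly the stated derivative bounds.

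\textbf{Comparison with the paper.} The paper merely transfers Lemma 2.11 of the earlier work with $S$ replaced by $S_h$ and uses $\normLHh{S_h}\le\normLH{S}$. The corrected version of your proof is self-contained and makes the $h$-independence explicit.
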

\begin{lemma}\label{lemma:unif_lipschitz}
  Let $M_{1,h}, M_{2,h} \in \cL(H_h)$ both fulfill $\normLHh{M_{i,h}} \leq \rho, i = 1,2$. Then 
  \begin{equation*}
    \normLHh{G_h M_{h,1} - G_h M_{h,2} } \leq 2\rho\normLH{S} \normLHh{M_{h,1} - M_{h,2}}.
  \end{equation*}
\end{lemma}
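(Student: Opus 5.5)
This is a direct computation: expand the difference of the quadratic maps and bound the resulting factors with the triangle inequality and submultiplicativity of the operator norm on $\cL(H_h)$. No earlier lemma is needed except the norm bound on $S_h$ from Lemma~\ref{lemma:ShQh_properties}.

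First we write the difference of the nonlinear maps by adding and subtracting the mixed term $M_{h,1} S_h M_{h,2}$:
\begin{equation*}
  G_h M_{h,1} - G_h M_{h,2}
  = -M_{h,1} S_h M_{h,1} + M_{h,2} S_h M_{h,2}
  = -M_{h,1} S_h (M_{h,1} - M_{h,2}) - (M_{h,1} - M_{h,2}) S_h M_{h,2}.
\end{equation*}
This identity is purely algebraic and holds in $\cL(H_h)$ without any self-adjointness assumptions.

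Next we take norms and use submultiplicativity:
\begin{equation*}
  \normLHh{G_h M_{h,1} - G_h M_{h,2}} \le \bigl(\normLHh{M_{h,1}} + \normLHh{M_{h,2}}\bigr)\normLHh{S_h}\,\normLHh{M_{h,1}-M_{h,2}}.
\end{equation*}
The hypothesis bounds each $\normLHh{M_{h,i}}$ by $\rho$. The second inequality of Lemma~\ref{lemma:ShQh_properties} gives $\normLHh{S_h} \le \normLH{S}$; this step is what makes the Lipschitz constant independent of $h$, and it rests on $\norm{\pi_h}_{\cL(H,H_h)} = 1$ together with $\pi_h^*$ being an isometry. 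Combining these yields the constant $2\rho\normLH{S}$.

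There is no real obstacle here. The only point needing care is that the bound on $S_h$ is uniform in $h$. Lemma~\ref{lemma:ShQh_properties} is stated under Assumption~\ref{ass:operators}, but its norm bound needs only $S \in \cL(H)$ and the fact that $\pi_h$ is an orthogonal projection, so we may invoke it here.
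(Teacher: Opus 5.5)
Your proof is correct and takes essentially the same route as the paper. The paper gives no separate argument: it transfers the corresponding Lipschitz estimate (Lemma 2.12 of the earlier work, which amounts to the same add-and-subtract computation) to the discretized setting, using the uniform bound $\normLHh{S_h} \le \normLH{S}$ from Lemma~\ref{lemma:ShQh_properties}, and you have simply written that computation out explicitly.
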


We consider a uniform temporal grid with step size $\tau$, such that $T = N\tau$. The fully discretized scheme is then defined by $P_{n, h} \approx P_h(n\tau)$ where $\pi_h^* P_h(n\tau) \pi_h \approx P(n\tau)$ and
\begin{align*}
  P_{n, h} = \cL_{h, \tau}^n P_{0, h}, \quad
  P_{0, h} = P_h(0),
\end{align*}
with
\begin{equation*}
  \cL_{h, \tau} = \exp{\tau F_h} \exp{\tau G_h}.
\end{equation*}
For a generic initial condition $M_{0,h} \in \SHhP$, the application of the operator $\cL_{h, \tau}$ can be formulated in the following way:
\begin{equation}\label{eq:scheme}
  \begin{split}
  \cL_{h, \tau} M_{0,h}
  &= \exp{\tau \cA_h} M_{0,h} + \int_0^\tau \exp{(\tau - s) \cA_h} Q_h \, \ds
    + \tau \exp{\tau \cA_h} G_h M_{0,h} \\
  &\quad + \int_0^\tau (\tau - s) \exp{\tau \cA_h} \ddds \exp{s G_h}M_{0,h} \, \ds \\
  &=: L_{0, h, \tau}(M_{0,h}) + L_{1, h,\tau} + L_{2, h, \tau}(M_{0,h})+ L_{3, h,\tau}(M_{0,h}).
  \end{split}
\end{equation}
This expansion will be used in the analysis in the next section.

\section{Full space-time convergence analysis}\label{sec:analysis_Lie}
Our main result, which will be proved in the rest of this section, is:
\begin{theorem}\label{theorem:main}
Let Assumption~\ref{ass:operators}, \ref{ass:p_0_reg} and~\ref{ass:disc_updated} be fulfilled. Then it holds that
\begin{equation*}
    \normLH{P(n\tau) - \pi_h^* P_{n,h} \pi_h} \leq C 
    \bigl( h^2(n\tau)^{-1} + h^2(1 + \vert \log h \vert) 
      + \tau ( 1 +  \vert \log \tau \vert) \bigr)
\end{equation*}
for sufficiently small $h$ and $\tau$. The error constant $C$ only depends on~$h_0$, $T$, $\normLH{P_0}$, $\normLH{\cA P_0}$, $\normLH{Q}$, and $\normLH{S}$.
\end{theorem}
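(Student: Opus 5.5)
We split the error into four terms, following the outline in Section~\ref{sec:introduction}. We introduce a regularized discrete initial datum
\begin{equation*}
  \Pt_{0,h} := \cA_h^{-1}\bigl(\pi_h (\cA P_0) \pi_h^*\bigr),
\end{equation*}
which is the natural discrete analogue of $P_0 = \cA^{-1}(\cA P_0)$. Let $\Pt_h$ be the solution of \eqref{eq:DREdisc_P} with initial value $\Pt_{0,h}$, and let $\Pt_{n,h} = \cL_{h,\tau}^n \Pt_{0,h}$. The triangle inequality and \eqref{eq:op_extension} then give
\begin{equation*}
  \normLH{P(n\tau) - \pi_h^*P_{n,h}\pi_h} \le \normLH{P(n\tau) - \pi_h^*P_h(n\tau)\pi_h} + \normLHh{P_h(n\tau) - \Pt_h(n\tau)} + \normLHh{\Pt_h(n\tau) - \Pt_{n,h}} + \normLHh{\Pt_{n,h} - P_{n,h}}.
\end{equation*}
Theorem~\ref{thm:KrollerKunish} bounds the first term by $Ch^2((n\tau)^{-1} + |\log h|)$.

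The key estimate is that the two discrete initial values are close. By Lemma~\ref{lemma:op_parabolic} with $M = \cA P_0$,
\begin{equation*}
  \normLHh{\Pt_{0,h} - \pi_h P_0 \pi_h^*} \le \normLH{\cA^{-1}(\cA P_0) - \pi_h^*\Pt_{0,h}\pi_h} \le C\normLH{\cA P_0}\, h^2(1+|\log h|).
\end{equation*}
This also gives $\normLHh{\Pt_{0,h}} \le C$ uniformly in $h$. A caveat is that $\Pt_{0,h}$ need not be nonnegative. We handle this either by showing that the Riccati flow and $\exp{\tau G_h}$ remain well defined and bounded for small perturbations of $\SHhP$ data, using $\normLHh{\Pt_{0,h} - P_h(0)} \to 0$ and continuity of $(I+tS_hM)^{-1}$, or by adding a small multiple $Ch^2(1+|\log h|)I$ to $\Pt_{0,h}$ so that it becomes nonnegative while the estimate above is preserved.

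For the second and fourth terms we use Lipschitz stability with respect to the initial value. For the semidiscrete flows, write the difference $E = P_h - \Pt_h$ in variation-of-constants form:
\begin{equation*}
  E(t) = \exp{t\cA_h}E(0) - \int_0^t \exp{(t-s)\cA_h}\bigl(G_hP_h - G_h\Pt_h\bigr)(s)\,\ds.
\end{equation*}
Lemma~\ref{lemma:unif_op_sg}, Lemma~\ref{lemma:unif_lipschitz} and Gronwall's inequality then give $\normLHh{E(t)} \le C\normLHh{E(0)}$, provided $P_h$ and $\Pt_h$ are uniformly bounded. For nonnegative data, uniform bounds follow from the standard comparison bound $0 \le P_h(t) \le \exp{t\cA_h}P_h(0) + \int_0^t \exp{s\cA_h}Q_h\,\ds$.

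For the discrete flows we show that $\cL_{h,\tau}$ is Lipschitz with constant $1 + C\tau$ on bounded sets. Since $\exp{\tau F_h}$ is affine, its linear part $\exp{\tau\cA_h}$ is bounded by Lemma~\ref{lemma:unif_op_sg}. However, that lemma only gives a bound $C$, not $1$, so we do not chain one-step bounds. Instead we telescope with the discrete variation-of-constants formula \eqref{eq:scheme}:
\begin{equation*}
  P_{n,h} - \Pt_{n,h} = \exp{n\tau\cA_h}(P_{0,h}-\Pt_{0,h}) + \sum_{k} \exp{(n-k)\tau\cA_h}\bigl[\text{nonlinear differences}\bigr].
\end{equation*}
The nonlinear differences are bounded with Lemmas~\ref{lemma:unif_nlinflow} and \ref{lemma:unif_lipschitz}, and a discrete Gronwall argument gives $\normLHh{P_{n,h}-\Pt_{n,h}} \le C\normLHh{P_{0,h}-\Pt_{0,h}}$. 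Uniform boundedness of the iterates follows in the same way as in \cite{HansenStillfjordAaberg2026}. Together with the initial-value estimate, both the second and fourth terms are therefore $O(h^2(1+|\log h|))$.

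The third term is the temporal error for regular data. Here we invoke the main convergence theorem of \cite{HansenStillfjordAaberg2026} for \eqref{eq:DREdisc_P} with initial value $\Pt_{0,h}$. Its constant depends only on $T$, $\normLHh{S_h}$, $\normLHh{Q_h}$, $\normLHh{\Pt_{0,h}}$, $\normLHh{\cA_h\Pt_{0,h}}$, and the semigroup constants. All of these are uniform in $h$: the semigroup constants by Lemma~\ref{lemma:unif_op_sg}, $S_h$ and $Q_h$ by Lemma~\ref{lemma:ShQh_properties}, and by construction
\begin{equation*}
  \cA_h\Pt_{0,h} = \pi_h(\cA P_0)\pi_h^*, \qquad \normLHh{\cA_h\Pt_{0,h}} \le \normLH{\cA P_0}.
\end{equation*}
This yields $C\tau(1+|\log\tau|)$. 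The main obstacle is exactly this step: we must check that every constant in the proof of \cite{HansenStillfjordAaberg2026}, such as bounds on $\cA_h\Pt_h(t)$ and on the local error terms $L_{j,h,\tau}$, depends only on these $h$-uniform quantities. That requires re-running that argument in $H_h$ and verifying uniform boundedness of $\Pt_h$ and $\cA_h\Pt_h$ in $h$. If the possible indefiniteness of $\Pt_{0,h}$ causes trouble, we use the shifted nonnegative datum described above. Summing the four terms proves Theorem~\ref{theorem:main}.
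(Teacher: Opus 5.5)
Your decomposition, the regularized datum $\Pt_{0,h}=\cA_h^{-1}(\pi_h(\cA P_0)\pi_h^*)$, the initial-value estimate via Lemma~\ref{lemma:op_parabolic}, and the Gronwall arguments for the second and fourth terms are exactly the paper's. The gap is the step you call the main obstacle and then leave open. You only know that $\cA_h\Pt_h$ is uniformly bounded at $t=0$. The constants in the temporal theorem of \cite{HansenStillfjordAaberg2026} depend on a bound $\gamma$ for $\Pt_h$ \emph{and} $\dot{\Pt}_h$ on all of $[0,T]$. Without an $h$-uniform bound on $\sup_t\normLHh{\dot{\Pt}_h(t)}$, the third term is therefore not controlled. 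The paper supplies this in Lemma~\ref{lemma:unif_sol_norms}, and the argument is short. $V_h=\dot{\Pt}_h$ solves the linear problem $\dot V_h=\cA_hV_h-V_hS_h\Pt_h-\Pt_hS_hV_h$ with $V_h(0)=\pi_h(\cA P_0)\pi_h^*+Q_h-\Pt_{0,h}S_h\Pt_{0,h}$. This initial value is bounded independently of $h$ precisely because of your identity $\cA_h\Pt_{0,h}=\pi_h(\cA P_0)\pi_h^*$. Variation of constants with Lemma~\ref{lemma:unif_op_sg}, plus Gronwall and the uniform bound on $\Pt_h$, then gives $\maxnorm{\pi_h^*\dot{\Pt}_h\pi_h}\le\gamma_0$.

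Some further points.

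First, you are right that $\Pt_{0,h}$ need not be nonnegative; the paper simply asserts $\Pt_h(t)\in\SHhP$. However, your second remedy fails. We have $\cA_h(cI)=c(A_h^*+A_h)$, and Assumption~\ref{ass:disc_updated} gives no bound on $\normLHh{A_h}$. For quasi-uniform finite elements, $c\normLHh{A_h}\sim 1+|\log h|$ when $c=Ch^2(1+|\log h|)$. That destroys the uniform bound on $\cA_h\Pt_{0,h}$, which is the whole point of the regularization. Use the perturbation route instead. Let $\gamma$ bound $P_h$. As long as $\normLHh{\Pt_h(t)}\le\gamma+1$, the Gronwall estimate for $P_h-\Pt_h$ gives a distance $Ch^2(1+|\log h|)<1$ for small $h$, so a continuation argument keeps $\Pt_h$ bounded. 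Also, $I+\tau S_hM$ is invertible whenever $\tau\normLH{S}\normLHh{M}<1$.

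Second, boundedness of $P_{n,h}$ does not follow ``as in'' \cite{HansenStillfjordAaberg2026}, since its datum is not uniformly regular. The paper uses an induction: as long as $E_4^k\le 1$, one has $\normLHh{P_{k,h}}\le\gamma_0+2$. Alternatively, you can use positivity in the Loewner order. For $M\ge 0$ we have $0\le\exp{\tau G_h}M\le M$, and $\exp{\tau F_h}$ is monotone. Together these give $0\le P_{n,h}\le \exp{n\tau\cA_h}P_{0,h}+\int_0^{n\tau}\exp{s\cA_h}Q_h\,\ds$, hence a uniform bound without induction.

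Finally, Lemmas~\ref{lemma:unif_nlinflow} and~\ref{lemma:unif_lipschitz} only yield the paper's weaker $E_4^n\le C(h^2(1+|\log h|)+\tau)$. Your sharper bound $C\normLHh{P_{0,h}-\Pt_{0,h}}$ additionally needs Lipschitz dependence of $\exp{sG_h}M$ on $M$. That holds, and the extra $\tau$ is harmless anyway.
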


Consider again the spatially semi-discretized DRE~\eqref{eq:DREdisc_P}.
To directly apply the temporal convergence results in~\cite{HansenStillfjordAaberg2026} to this equation, we would have to bound the derivative $\dot{P}_h$ over $(0, T]$, uniformly in $h$. This is unfortunately not straightforward, since $\normLHh{A_h} \to \infty$ as $h \to 0$ and $\dot{P}_h(t)$ approaches $\cA_h (\pi_h P_0 \pi_h^*)$ when $t \to 0$. For this reason, we introduce the regularized initial condition 
\begin{equation*}
\Pt_{0, h}:=\cA_h^{-1} ( \pi_h \cA P_0 \pi_h^*) 
\end{equation*}
together with the auxiliary equation
\begin{equation}
  \label{eq:DREdisc_Ptilde}
  \begin{split}
    &\dot{\Pt}_h(t) = \cA_h \Pt_h(t) + Q_h - \Pt_h(t) S_h \Pt_h(t), \quad t \in (0,T], \\
    &\Pt_h(0) =\Pt_{0,h},
  \end{split}
\end{equation}
where $\Pt_h(t) \in \SHhP$. Analogously to the full discretization $P_{n,h}$, we also define the auxiliary discretization $\Pt_{n, h}$ by
\begin{equation*}
  \Pt_{n, h} = \cL_{h, \tau}^n \Pt_{0, h}.
\end{equation*}
The initial value $\Pt_{0,h}$ is close to $P_{0,h}$, as shown in the next lemma. 
\begin{lemma}\label{lemma:smooth_init_bound}
Let Assumption~\ref{ass:operators}, \ref{ass:p_0_reg} and~\ref{ass:disc_updated} hold.
Then 
\begin{equation*}
  \normLH{ \pi_h^* P_{0,h} \pi_h - \pi_h^*\Pt_{0, h} \pi_h } \leq C \normLH{\cA P_0} h^2 ( 1 + \vert \log{h} \vert ) .
\end{equation*}
\end{lemma}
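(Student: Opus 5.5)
Our plan is to write $P_0$ itself as $\cA^{-1}$ applied to something and then invoke Lemma~\ref{lemma:op_parabolic} directly. By Assumption~\ref{ass:p_0_reg}, $P_0 \in \domain{\cA}$. Hence $M := \cA P_0 \in \SH$ is a bounded self-adjoint operator. By the invertibility in Lemma~\ref{lemma:cA_invertible}, injectivity of $\cA$ gives $P_0 = \cA^{-1}(M)$.

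On the discrete side, we recall that $P_{0,h} = P_h(0) = \pi_h P_0 \pi_h^*$ and $\Pt_{0,h} = \cA_h^{-1}(\pi_h M \pi_h^*)$. Since $\pi_h^*\pi_h$ is the orthogonal projection onto $H_h$ in $H$, we have $\pi_h^* P_{0,h}\pi_h = \Pi_h P_0 \Pi_h$, where $\Pi_h := \pi_h^*\pi_h$. We therefore split the difference as
\begin{equation*}
  \pi_h^* P_{0,h}\pi_h - \pi_h^*\Pt_{0,h}\pi_h
  = \Pi_h\bigl(\cA^{-1}(M) - \pi_h^*\cA_h^{-1}(\pi_h M\pi_h^*)\pi_h\bigr)\Pi_h,
\end{equation*}
using that $\Pi_h \pi_h^* = \pi_h^*$ and $\pi_h \Pi_h = \pi_h$, so the second term is unchanged by the outer projections. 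Since $\normLH{\Pi_h} \le 1$, the norm of the left-hand side is at most $\normLH{\cA^{-1}(M) - \pi_h^*\cA_h^{-1}(\pi_h M\pi_h^*)\pi_h}$.

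Lemma~\ref{lemma:op_parabolic} bounds this last quantity by $C\normLH{M} h^2(1+|\log h|)$, with $\normLH{M} = \normLH{\cA P_0}$, which is exactly the claimed estimate.

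The only delicate point is the identification $P_0 = \cA^{-1}(\cA P_0)$. This needs $\cA^{-1}$ in Lemma~\ref{lemma:cA_invertible} to be the genuine two-sided inverse on $\domain{\cA}$, i.e.\ the injectivity part of that lemma, which is available. Should one prefer to avoid the projection trick, an alternative is to bound $\normLH{P_0 - \Pi_h P_0 \Pi_h}$ separately. However, that would require a rate for $\norm{x-\pi_h x}$ that Assumption~\ref{ass:disc_updated} does not supply, so the sandwiching by $\Pi_h$ is the route to take.
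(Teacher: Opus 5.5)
Your proposal is correct and is essentially the paper's proof. The paper also writes $P_0 = \cA^{-1}(\cA P_0)$ and factors the difference as $\pi_h(\cdots)\pi_h^*$ in $\cL(H_h)$, which is equivalent to your sandwich by $\Pi_h = \pi_h^*\pi_h$ in $\cL(H)$; it then applies Lemma~\ref{lemma:op_parabolic} with $M = \cA P_0$, using $\norm{\pi_h} = \norm{\pi_h^*} = 1$.
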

\begin{proof}
First note that both $P_{0,h}$ and $\Pt_{0,h}$ are elements of $\cL(H_h)$, and that $\pi_h \pi_h^*$ is the identity operator on $H_h$.
Thus,
\begin{equation*}
  P_{0,h} - \Pt_{0,h} = \pi_h \Bigl (P_0 -  \pi_h^* \cA_h^{-1} \bigl( \pi_h \cA P_0 \pi_h^*\bigr) \pi_h\Bigr )\pi_h^*.
\end{equation*}
Since $P_0 \in \domain{\cA}$ we may write $P_0 = \cA^{-1}(\cA P_0)$ and therefore
\begin{equation*}
  \begin{split}
    &\normLH{ \pi_h^* P_{0,h} \pi_h - \pi_h^*\Pt_{0, h} \pi_h }\\
    &\quad= \normLHh{ P_{0,h}  - \Pt_{0, h} } \\
    &\quad\leq\norm{\pi_h}_{\cL(H, H_h)} \normLH{\cA^{-1} (\cA P_0) -  \pi_h^* \cA_h^{-1}
      \bigl( \pi_h \cA P_0 \pi_h^*\bigr) \pi_h}\norm{\pi_h^*}_{\cL(H_h,H)} \\
    &\quad\leq C\normLH{\cA P_0}h^2 \bigl ( 1 + \vert \log h \vert \bigr),
  \end{split}
\end{equation*}
where we have used the identity~\eqref{eq:op_extension}, Lemma~\ref{lemma:op_parabolic}, with $ M = \cA P_0$,
and the fact that $\norm{\pi_h}_{\cL(H, H_h)} = \norm{\pi_h^*}_{\cL(H_h, H)} = 1$.
\end{proof}
We also have the following uniform bounds with respect to~$h$.
\begin{lemma}\label{lemma:unif_sol_norms}
  Let Assumption~\ref{ass:operators}, \ref{ass:p_0_reg} and~\ref{ass:disc_updated} hold.
  Then there exists a parameter~$\gamma_0>0$ such that 
  \begin{equation}\label{eq:unif_sol_norms}
\max \bigl\{ \maxnorm{\pi_h^* P_h \pi_h}, \maxnorm{\pi_h^* \Pt_h \pi_h}, \maxnorm{\pi_h^* \dot{\Pt}_h \pi_h }\bigr\} \leq \gamma_0
  \end{equation}
for all $h \leq h_0$. Here, $\gamma_0=\gamma_0(h_0,\normLH{P_0},\normLH{\cA P_0},T,\normLH{Q},\normLH{S})$.
\end{lemma}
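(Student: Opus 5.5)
We bound the three quantities in \eqref{eq:unif_sol_norms} separately. By \eqref{eq:op_extension} it suffices to work in $\cL(H_h)$. For $P_h$ and $\Pt_h$, the standard approach for Riccati equations with non-negative data is an a priori two-sided estimate via order bounds. Since $P_h(t)\in\SHhP$, and likewise $\Pt_h(t)$ provided $\Pt_{0,h}\ge 0$, the solution is bounded below by $0$. From above it is dominated by the solution of the linear Lyapunov equation obtained by dropping the term $-P_hS_hP_h\le 0$. That is,
\begin{equation*}
  0\le P_h(t)\le \exp{t\cA_h}P_h(0)+\int_0^t\exp{(t-s)\cA_h}Q_h\,\ds .
\end{equation*}
This follows from the LQR representation, or from a comparison argument applied to the difference, which satisfies a linear equation with a non-negative forcing term. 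For self-adjoint non-negative operators the norm is controlled by the upper order bound. Lemma~\ref{lemma:unif_op_sg} and Lemma~\ref{lemma:ShQh_properties} then give
\begin{equation*}
  \normLHh{P_h(t)}\le C(\normLH{P_0}+T\normLH{Q}),
\end{equation*}
uniformly in $h$. For $\Pt_h$ the same argument applies with $\normLHh{\Pt_{0,h}}$ in place of $\normLH{P_0}$. Here $\normLHh{\Pt_{0,h}}\le \norm{\cA_h^{-1}}\,\normLH{\cA P_0}\le C\normLH{\cA P_0}$, since the proof of Lemma~\ref{lemma:cA_invertible} bounds $\cA_h^{-1}$ by $\int_0^\infty C^2\exp{2\omega_0 s}\,\ds$ uniformly in $h$ thanks to \eqref{eq:unif_analyt_0}. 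Alternatively, we can use Lemma~\ref{lemma:smooth_init_bound} together with $h\le h_0$.

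One subtlety is that $\Pt_{0,h}$ need not be non-negative. If it is not, the lower order bound fails, and I would instead get the bound on $\Pt_h$ from a perturbation argument. Write $\Pt_h=P_h+E_h$. Then $E_h$ satisfies
\begin{equation*}
  \dot E_h=\cA_hE_h-P_hS_hE_h-E_hS_hP_h-E_hS_hE_h, \qquad E_h(0)=O\bigl(h^2(1+|\log h|)\bigr).
\end{equation*}
Using the variation-of-constants formula with $\exp{t\cA_h}$, together with Gronwall and a continuation argument, $E_h$ stays small on $[0,T]$ whenever $h$ is small enough. For $h$ not small, i.e.\ $h$ bounded below and at most $h_0$, a local existence argument yields a crude bound, or one simply takes $h$ small as in the main theorem.

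The main step is the bound on $\dot{\Pt}_h$. Set $W_h=\dot{\Pt}_h$. Differentiating \eqref{eq:DREdisc_Ptilde}, which is legitimate since it is a finite-dimensional ODE, gives the linear equation
\begin{equation*}
  \dot W_h=\cA_hW_h-W_hS_h\Pt_h-\Pt_hS_hW_h,
\end{equation*}
with initial value
\begin{equation*}
  W_h(0)=\cA_h\Pt_{0,h}+Q_h-\Pt_{0,h}S_h\Pt_{0,h}=\pi_h(\cA P_0)\pi_h^*+Q_h-\Pt_{0,h}S_h\Pt_{0,h}.
\end{equation*}
This is exactly where the choice of regularized initial condition enters: $\normLHh{W_h(0)}\le \normLH{\cA P_0}+\normLH{Q}+C\normLH{S}\normLH{\cA P_0}^2$, uniformly in $h$. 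The variation-of-constants formula gives
\begin{equation*}
  W_h(t)=\exp{t\cA_h}W_h(0)-\int_0^t\exp{(t-s)\cA_h}\bigl(W_hS_h\Pt_h+\Pt_hS_hW_h\bigr)(s)\,\ds .
\end{equation*}
By Lemma~\ref{lemma:unif_op_sg} and the uniform bound on $\Pt_h$,
\begin{equation*}
  \normLHh{W_h(t)}\le C\normLHh{W_h(0)}+C\int_0^t\normLHh{W_h(s)}\,\ds,
\end{equation*}
and Gronwall's inequality finishes the proof with $\gamma_0$ depending only on the listed quantities.

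I expect the main obstacle to be the uniform bound on $\Pt_h$ itself, specifically the possible loss of positivity of $\Pt_{0,h}$. That is where the perturbation and continuation argument, or an additional restriction on $h_0$, is needed.
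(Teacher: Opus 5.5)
Your bound on $\dot{\Pt}_h$ is the paper's argument: differentiate, use $\cA_h\Pt_{0,h}=\pi_h\cA P_0\pi_h^*$ to control the initial value, apply variation of constants with Lemma~\ref{lemma:unif_op_sg}, then Gr\"onwall. Your order bound $0\le P_h(t)\le \mathrm{e}^{t\cA_h}P_{0,h}+\int_0^t \mathrm{e}^{(t-s)\cA_h}Q_h\,\ds$ is a correct, explicit version of the a priori estimate the paper imports from Bensoussan et al. You are also right that $\Pt_{0,h}=\cA_h^{-1}(\pi_h\cA P_0\pi_h^*)$ need not be non-negative; it is only $\cO(h^2(1+|\log h|))$-close to $P_{0,h}$. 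The paper's proof does not address this. It declares $\Pt_h(t)\in\SHhP$ in \eqref{eq:DREdisc_Ptilde} and applies the Bensoussan bound, which is derived for non-negative initial data, with $\normLHh{\Pt_{0,h}}$ controlled via Lemma~\ref{lemma:smooth_init_bound} and $h\le h_0$.

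The gap is in how you handle $h$ that is not small. Your perturbation/continuation argument for $E_h=\Pt_h-P_h$ is sound. It gives existence on $[0,T]$ and $\normLHh{\Pt_h(t)}\le \maxnorm{\pi_h^*P_h\pi_h}+1$ for all $h\le h_1$, with $h_1$ depending on the data. Your fallback for $h_1<h\le h_0$, that ``a local existence argument yields a crude bound'', fails. Local existence only gives a solution on a short interval dictated by the size of $\Pt_{0,h}$, not on all of $[0,T]$, and a Riccati flow from indefinite data can blow up. Already the scalar equation $\dot p=2ap+q-sp^2$ with $a<0$, $q\ge 0$, $s>0$ runs to $-\infty$ in finite time whenever $p(0)<(a-\sqrt{a^2+sq})/s$, and the blow-up time tends to $0$ as $p(0)\to-\infty$. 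For $h$ comparable to $h_0$ the negative part of $\Pt_{0,h}$ is only bounded by $Ch_0^2(1+|\log h_0|)\normLH{\cA P_0}$, which need not be small, so nothing rules this out. What you actually prove is the lemma with $h_0$ replaced by the data-dependent threshold $\min\{h_0,h_1\}$. That suffices for Theorem~\ref{theorem:main}, which is only claimed for sufficiently small $h$, provided the later lemmas are read with the reduced $h_0$. The statement for arbitrary $h_0$ needs an extra ingredient, such as the hypothesis $\Pt_{0,h}\in\SHhP$ that the paper's proof uses tacitly.
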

\begin{proof}
By the same proof as in \cite[Lemmas IV.1.2.2--1.2.3, Theorem IV.1.2.1]{Bensoussan2007}, applied
to the finite dimensional problems~\eqref{eq:DREdisc_P} and~\eqref{eq:DREdisc_Ptilde}, respectively, we have that $\maxnorm{\pi_h^* P_h \pi_h},\maxnorm{\pi_h^* \Pt_h \pi_h}$ can be bounded 
in terms of $T,\normLHh{Q},\normLHh{S}$ and $\norm{P_{0,h}}_{\cL(H_h)},\norm{\Pt_{0, h}}_{\cL(H_h)}$. The terms $\normLHh{Q},\normLHh{S}$ are uniformly bounded with respect to $h$ by Lemma~\ref{lemma:ShQh_properties}, $\norm{P_{0,h}}_{\cL(H_h)}\leq \norm{P_0}_{\cL(H)}$, by construction, and Lemma~\ref{lemma:smooth_init_bound} gives
 \begin{equation*}
 \norm{\Pt_{0, h}}_{\cL(H_h)}=\normLH{\pi_h^*\Pt_{0, h} \pi_h} \leq \normLH{P_0} + Ch_0^2(1 + | \log h_0|) \normLH{\cA P_0}.
 \end{equation*}
Hence, the $P_h$ and $\Pt_h$ terms in~\eqref{eq:unif_sol_norms} are uniformly bounded with respect to $h$. 

Let $V_h = \dot{\Pt}_h$. As $\cA_h\Pt_{0,h}= \pi_h \cA P_0 \pi_h^*$ one has that
  \begin{equation*}
    \begin{aligned}
      \dot{V}_h(t) &= \cA_h V_h (t)  - V_h(t) S_h \Pt_h(t) - \Pt_h(t) S_h V_h(t), \quad t \in (0,T],\\
      V_h(0) &= \pi_h \cA P_0 \pi_h^*  + Q_h 
        - \Pt_{0,h} S_h \Pt_{0,h}.
    \end{aligned}
  \end{equation*}
By the variation of constants formula we can represent this as
  \begin{equation*}
    V_h(t) = \exp{t \cA_h}V_h(0) + \int_0^t \exp{(t-s)\cA_h} (\Pt_h(s) S_h V(s) + V(s) S_h \Pt(s)) \, \ds.
  \end{equation*}
Taking the norm results in
  \begin{equation*}
    \begin{split}
      \normLHh{V_h(t)&} \leq \normLHh{\exp{\cA_h t}}\normLHh{V_h(0)} \\
      &\quad + 2\int_0^t \normLHh{\exp{\cA_h (t-s)}}\normLH{S}\maxnorm{\pi_h^* \Pt_h \pi_h}\normLHh{V_h(t)}\,\ds \\
      &\leq C \normLHh{V_h(0)} + C\normLH{S}\maxnorm{\pi_h^* \Pt_h \pi_h} \int_0^t \normLHh{V_h(t)}\,\ds \\
      &\leq C \bigl(\normLH{\cA P_0}  + \normLH{S} \normLH{\pi_h^* \Pt_{0,h} \pi_h}^2 
        + \normLH{Q} \bigr) \\
      &\quad + C\normLH{S} \maxnorm{\pi_h^* \Pt_h \pi_h} \int_0^t \normLHh{V_h(t)}\,\ds.
    \end{split}
  \end{equation*}
The continuous Grönwall's inequality, see e.g.~\cite[Proposition 2.1]{Emmrich1999}, then yields that
 \begin{equation*}
\normLH{\pi_h^*\dot{\Pt}(t)\pi_h}=\normLHh{V_h(t)} \leq C(\normLH{\pi_h^* \Pt_{0,h} \pi_h})\exp{C\normLH{S}T\maxnorm{\pi_h^* \Pt_h \pi_h} }.
\end{equation*}
 As the right-hand-side does not depend on $t$ and the terms $\normLH{\pi_h^* \Pt_{0,h} \pi_h}$, $\maxnorm{\pi_h^* \Pt_h \pi_h}$ are uniformly bounded with respect to $h$, we have also derived a uniform bound for $\maxnorm{\pi_h^*\dot{\Pt}\pi_h}$.
\end{proof}
\begin{remark}
  The value $\gamma_0$ defined in Lemma~\ref{lemma:unif_sol_norms} will be used throughout the rest of the analysis without further reference to the lemma. 
\end{remark}

We can now rewrite the full error in the following way:
\begin{equation}\label{eq:error_formulation}
  \begin{split}
    \normLH{ P(n\tau)- \pi_h^*  P_{n,h} \pi_h}
    &\leq\normLH{P(n\tau) - \pi_h^* P_h(n\tau) \pi_h}\\
    &\quad+ \normLH{ \pi_h^* \bigl( P_h(n\tau) - \Pt_h(n\tau)\bigr) \pi_h }\\
    &\quad + \normLH{\pi_h^* \bigl(\Pt_h(n\tau) - \Pt_{n,h} \bigr) \pi_h }\\
    &\quad + \normLH{\pi_h^* \bigl(\Pt_{n,h} - P_{n,h} \bigr) \pi_h } \\
    &=: E_{1}^n + E_{2}^n + E_{3}^n + E_{4}^n.
  \end{split}
\end{equation}
This formalises the outline of the analysis given in Section~\ref{sec:introduction}; the $i$-th item in the list corresponds to $E_i^n$.
The first term $E_{1}^n$ is directly bounded in terms of~$h$ by Theorem~\ref{thm:KrollerKunish}. The following lemmas bound the remaining terms.
\begin{lemma}\label{lemma:init_cond_diff_bound}
Let Assumption~\ref{ass:operators}, \ref{ass:p_0_reg} and~\ref{ass:disc_updated} be fulfilled. Then
\begin{equation*}
E_{2}^n \leq C(\gamma_0,\normLH{\cA P_0})\,h^{2}( 1 + | \log h | )
\end{equation*}
for all $n = 0, \ldots, N$, and $h \le h_0$.
\end{lemma}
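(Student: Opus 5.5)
We bound $E_2^n$ by a stability estimate for the semi-discrete Riccati equation with respect to the initial data, and then invoke Lemma~\ref{lemma:smooth_init_bound}. Let $D_h(t) = P_h(t) - \Pt_h(t) \in \SHh$. Subtracting~\eqref{eq:DREdisc_Ptilde} from~\eqref{eq:DREdisc_P}, the $Q_h$ terms cancel. For the quadratic terms we use
\begin{equation*}
  P_h S_h P_h - \Pt_h S_h \Pt_h = D_h S_h P_h + \Pt_h S_h D_h .
\end{equation*}
Hence $D_h$ solves the linear equation
\begin{equation*}
  \dot{D}_h(t) = \cA_h D_h(t) - D_h(t) S_h P_h(t) - \Pt_h(t) S_h D_h(t), \qquad D_h(0) = P_{0,h} - \Pt_{0,h}.
\end{equation*}
Since $H_h$ is finite dimensional, $\cA_h$ is bounded and the variation of constants formula holds without further justification:
\begin{equation*}
  D_h(t) = \exp{t\cA_h} D_h(0) - \int_0^t \exp{(t-s)\cA_h}\bigl( D_h(s) S_h P_h(s) + \Pt_h(s) S_h D_h(s)\bigr)\,\ds .
\end{equation*}

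Next we take norms. Lemma~\ref{lemma:unif_op_sg}, with $\omega_0$ replaced by $0$, bounds $\normLHh{\exp{t\cA_h}M_h}\le C\normLHh{M_h}$ uniformly in $h$. Lemma~\ref{lemma:ShQh_properties} gives $\normLHh{S_h}\le\normLH{S}$, and Lemma~\ref{lemma:unif_sol_norms} together with~\eqref{eq:op_extension} gives $\normLHh{P_h(s)},\normLHh{\Pt_h(s)}\le\gamma_0$. Combining these,
\begin{equation*}
  \normLHh{D_h(t)} \le C\normLHh{D_h(0)} + 2C\gamma_0\normLH{S}\int_0^t \normLHh{D_h(s)}\,\ds .
\end{equation*}
The continuous Grönwall inequality (as in the proof of Lemma~\ref{lemma:unif_sol_norms}) then yields
\begin{equation*}
  \normLHh{D_h(t)} \le C\exp{2C\gamma_0\normLH{S}T}\normLHh{D_h(0)}
\end{equation*}
for all $t\in[0,T]$, in particular for $t=n\tau$.

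Finally, by~\eqref{eq:op_extension} we have $E_2^n = \normLHh{D_h(n\tau)}$ and $\normLHh{D_h(0)} = \normLH{\pi_h^*P_{0,h}\pi_h - \pi_h^*\Pt_{0,h}\pi_h}$. The latter is bounded by $C\normLH{\cA P_0}h^2(1+|\log h|)$ by Lemma~\ref{lemma:smooth_init_bound}. This gives the claim with a constant depending only on $\gamma_0$, $\normLH{\cA P_0}$ and the problem data $T$, $\normLH{S}$.

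The only point needing care is that every constant must be independent of $h$, even though $\normLHh{A_h}\to\infty$. This is why we estimate the semigroup through Lemma~\ref{lemma:unif_op_sg} and never use the boundedness of $\cA_h$ quantitatively; that boundedness serves only to justify the variation of constants formula. The uniform bound on the nonlinear coefficients comes from Lemma~\ref{lemma:unif_sol_norms}.
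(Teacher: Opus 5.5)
Your proof is correct and follows the paper's argument step for step. The common outline is the linear error equation for $P_h-\Pt_h$, variation of constants, the $h$-uniform bounds from Lemmas~\ref{lemma:unif_op_sg}, \ref{lemma:ShQh_properties} and~\ref{lemma:unif_sol_norms}, the continuous Gr\"onwall inequality, and finally Lemma~\ref{lemma:smooth_init_bound}. Your sign in the integral term, $-\int_0^t \exp{(t-s)\cA_h}\bigl(D_h S_h P_h + \Pt_h S_h D_h\bigr)\,\ds$, is the correct one; the paper's displayed representation has a harmless sign slip inside that integral.
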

\begin{proof}
The operator $W_h = P_h - \Pt_h$ fulfills the equation
\begin{equation*}
  \begin{split}
      &\dot{W}_h = \cA_hW_h - P_h S_h P_h + \Pt_h S_h \Pt_h = \cA_hW_h - W_h S_h P_h - \Pt_h S_h W_h, t\in(0,T],\\
      &W_h(0) = P_{0,h} - \Pt_{0,h},
  \end{split}
\end{equation*}
 and we have the representation
\begin{equation*}
  W_h(t)
    = \exp{t \cA_h} \bigl( P_{0,h} - \Pt_{0,h} \bigr) 
    -\int_0^t \exp{(t-s) \cA_h} \bigl( W_h(s) S_h P_h(s) - \Pt_h(s) S_h W_h(s) \bigr)\ds.
\end{equation*}
By Lemmas~\ref{lemma:unif_op_sg}, \ref{lemma:smooth_init_bound} and~\ref{lemma:unif_sol_norms} we have the bound
\begin{equation*}
    \norm{W_h(t)}_{\cL(H_h)}
    \leq C \normLH{\cA P_0} h^2 ( 1 + \vert \log{h} \vert ) + C(\gamma_0) \int_0^t \norm{W_h(s)}_{\cL(H_h)} \ds.
\end{equation*}
The continuous Grönwall's inequality once again implies that
\begin{equation*}
    \normLH{ \pi_h^* \bigl( P_h(t) - \Pt_h(t)\bigr) \pi_h}=\norm{W_h(t)}_{\cL(H_h)}
    \leq C \normLH{\cA P_0} h^2 ( 1 + \vert \log{h} \vert )  \exp{C(\gamma_0) T}
 \end{equation*}
for all $t\in [0,T]$, and the desired bound follows.
\end{proof}
\begin{lemma}\label{lemma:infdim_to_unif}
Let Assumption~\ref{ass:operators}, \ref{ass:p_0_reg}, and~\ref{ass:disc_updated} hold. Then there exist a constant $\tau_0>0$, such that 
\begin{equation*} 
\normLH{\pi_h^*\Pt_{n,h}\pi_h}\le \gamma_0+1\quad\text{and}\quad E_{3}^n \leq C(\gamma_0)\tau ( 1 + \vert \log \tau \vert )
\end{equation*}
for all $n = 0, \ldots, N$,  $h \le h_0$, and $\tau\leq \tau_0$. Here, $\tau_0$ depends on the parameter~$\gamma_0$ and the problem data $T,\normLH{Q},\normLH{S}$.
\end{lemma}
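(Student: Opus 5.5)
The plan is to reproduce the temporal convergence argument of~\cite{HansenStillfjordAaberg2026} for the finite dimensional problem~\eqref{eq:DREdisc_Ptilde}, checking at every step that the constants depend only on $\gamma_0$, $T$, $\normLH{Q}$, $\normLH{S}$, and not on $h$ (for instance through $\normLHh{A_h}$). The standard route is a \emph{stability plus consistency} (Lady Windermere's fan) argument combined with an induction on $n$, which gives the a priori bound $\normLH{\pi_h^*\Pt_{n,h}\pi_h}\le\gamma_0+1$ and the error bound simultaneously.

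First, I write the global error as a telescoping sum,
\begin{equation*}
\Pt_h(n\tau)-\Pt_{n,h}=\sum_{k=0}^{n-1}\Bigl(\cL_{h,\tau}^{n-k-1}\exp{\tau\Phi_h}\Pt_h(k\tau)-\cL_{h,\tau}^{n-k-1}\cL_{h,\tau}\Pt_h(k\tau)\Bigr),
\end{equation*}
where $\exp{\tau\Phi_h}$ denotes the exact flow of~\eqref{eq:DREdisc_Ptilde}. It is cleaner, however, to avoid nonlinear stability of $\cL_{h,\tau}^{m}$ and work directly with the expansion~\eqref{eq:scheme}. I would write both $\Pt_h((k+1)\tau)$ (by variation of constants) and $\Pt_{k+1,h}$ (by~\eqref{eq:scheme}) in terms of $\exp{\tau\cA_h}$. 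The error $e_k=\Pt_h(k\tau)-\Pt_{k,h}$ then satisfies
\begin{equation*}
e_{n}=\exp{n\tau\cA_h}e_0+\tau\sum_{k=0}^{n-1}\exp{(n-k)\tau\cA_h}\bigl(G_h\Pt_h(k\tau)-G_h\Pt_{k,h}\bigr)+\sum_{k=0}^{n-1}\delta_k,
\end{equation*}
where $e_0=0$ and $\delta_k$ collects the local defects.

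The defects are $\int_0^\tau \exp{(\tau-s)\cA_h}G_h\Pt_h(k\tau+s)\,\ds-\tau\exp{\tau\cA_h}G_h\Pt_h(k\tau)$, propagated by $\exp{(n-k-1)\tau\cA_h}$, plus the terms $L_{3,h,\tau}$. The $L_3$ terms are $\cO(\tau^2)$ uniformly in $h$ by Lemma~\ref{lemma:unif_nlinflow}, Lemma~\ref{lemma:unif_op_sg} and the bound $\gamma_0+1$ on the iterates, so summed they give $C\tau$.

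The main obstacle is the quadrature defect. I would split it as
\begin{equation*}
\int_0^\tau \exp{(\tau-s)\cA_h}\bigl(G_h\Pt_h(k\tau+s)-G_h\Pt_h(k\tau)\bigr)\ds+\int_0^\tau\bigl(\exp{(\tau-s)\cA_h}-\exp{\tau\cA_h}\bigr)G_h\Pt_h(k\tau)\,\ds.
\end{equation*}
The first part is $\cO(\tau^2)$ by Lemma~\ref{lemma:unif_lipschitz} and the uniform bound on $\dot{\Pt}_h$ from Lemma~\ref{lemma:unif_sol_norms}. This is exactly why the regularized initial value was introduced. For the second part, after propagation by $\exp{(n-k-1)\tau\cA_h}$ I use
\begin{equation*}
\exp{(n-k-1)\tau\cA_h}\bigl(\exp{(\tau-s)\cA_h}-\exp{\tau\cA_h}\bigr)=-\int_{\tau-s}^{\tau}\cA_h\exp{((n-k-1)\tau+r)\cA_h}\,\diff{r}.
\end{equation*}
By Lemma~\ref{lemma:unif_op_sg} this is bounded by $C s/((n-k-1)\tau)$ for $k<n-1$. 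Each such term then contributes $C\tau^2/((n-k-1)\tau)$, and summing over $k$ gives $C\tau\sum_{j=1}^{n}j^{-1}\le C\tau(1+|\log\tau|)$. The last term $k=n-1$ is simply $\cO(\tau)$. Only parabolic smoothing bounds uniform in $h$ enter here, never $\normLHh{\cA_h}$.

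Finally, the Lipschitz term is bounded by Lemma~\ref{lemma:unif_lipschitz} with $\rho=\gamma_0+1$ and by Lemma~\ref{lemma:unif_op_sg}, giving $C\tau\sum_k\normLHh{e_k}$. The discrete Grönwall inequality then yields $\normLHh{e_n}\le C(\gamma_0)\tau(1+|\log\tau|)$. To close the induction on the a priori bound, I assume $\normLH{\pi_h^*\Pt_{k,h}\pi_h}\le\gamma_0+1$ for all $k<n$. The estimate above then gives $\normLHh{\Pt_{n,h}}\le\gamma_0+C(\gamma_0)\tau(1+|\log\tau|)\le\gamma_0+1$, provided $\tau\le\tau_0$ is chosen so that $C(\gamma_0)\tau_0(1+|\log\tau_0|)\le1$. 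Finally, $E_3^n=\normLHh{e_n}$ by~\eqref{eq:op_extension}.
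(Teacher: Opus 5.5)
Your proof is correct and follows the same strategy as the paper. The paper invokes \cite[Theorem 3.6]{HansenStillfjordAaberg2026} for fixed $h$ and notes that its constants depend only on the bound $\gamma_0$ for $\Pt_h$ and $\dot{\Pt}_h$ (Lemma~\ref{lemma:unif_sol_norms}), on $\normLHh{S_h}$ and $\normLHh{Q_h}$, and on the $h$-uniform semigroup and smoothing bounds of Lemma~\ref{lemma:unif_op_sg}; you instead carry out that argument explicitly (Duhamel comparison with~\eqref{eq:scheme}, the logarithm from summing the smoothing bound, discrete Gr\"onwall plus induction for $\gamma_0+1$), which makes the $h$-independence visible step by step rather than asserted.
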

\begin{proof}
For a fixed $h$, the result follows from~\cite[Theorem 3.6]{HansenStillfjordAaberg2026} applied to~\eqref{eq:DREdisc_Ptilde}. What needs to be verified is that there  are $h$-independent constants $\tau_0$ and $C(\gamma_0)$. The crucial element of the proof of~\cite[Theorem 3.6]{HansenStillfjordAaberg2026} is to choose a parameter $\gamma$ such that
 \begin{equation}\label{eq:gamma}
 \max \bigl\{\maxnorm{\pi_h^* \Pt_h \pi_h}, \maxnorm{\pi_h^* \dot{\Pt}_h \pi_h }\bigr\} \leq \gamma.
 \end{equation}
Lemma~\ref{lemma:unif_sol_norms} enables the choice $\gamma = \gamma_0$, which implies that the bound~\eqref{eq:gamma} holds for all $h \leq h_0 $. The constants also depend on $\normLHh{S_h}$, $\normLHh{Q_h}$, $\normLHh{\exp{\tau \cA_h}}$, and $\normLHh{\cA_h \exp{\tau \cA_h}}$. The first two of these terms are uniformly bounded by Lemma~\ref{lemma:ShQh_properties}, and the last two by Lemma~\ref{lemma:unif_op_sg}. Thus no $h$-dependent bounds are required in the derivation of the constants.
\end{proof}
It remains to bound $E^n_{4}$. This requires an additional induction argument, as $\cL_{\tau, h}^n$ is not globally Lipschitz continuous.
\begin{lemma}\label{lemma:bounded_expanse_induction}
Let Assumption~\ref{ass:operators}, \ref{ass:p_0_reg} and~\ref{ass:disc_updated} hold. If $\normLH{\pi_h^*P_{k,h}\pi_h} \leq \gamma_0+2$ for $k = 0, \ldots, n$, then
\begin{equation*}
E^{n+1}_{4} \leq D_0 \bigl( h^{2} (1 + \vert \log h \vert ) + \tau \bigr)
\end{equation*}
for all $h \le h_0$ and $\tau\le \tau_0$. Here, $D_0$ only depends on the parameter~$\gamma_0$ and the problem data $T,\normLH{Q},\normLH{S}$.
\end{lemma}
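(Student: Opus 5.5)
The plan is to work with the difference $D_{k,h} := \Pt_{k,h} - P_{k,h}$ and, as for the vector-valued parabolic case, write it as a telescoping sum through the expansion~\eqref{eq:scheme}. Lipschitz continuity of $\cL_{h,\tau}$ alone gives a factor $\exp{C n\tau}$ times the initial difference. That is enough here, because the initial difference is already $O(h^2(1+|\log h|))$ by Lemma~\ref{lemma:smooth_init_bound}. The only thing to check is that all iterates stay in a ball where the nonlinear flow is Lipschitz with an $h$-independent constant, and this is the purpose of the hypothesis $\normLH{\pi_h^*P_{k,h}\pi_h}\le\gamma_0+2$ together with Lemma~\ref{lemma:infdim_to_unif}.

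\textbf{Step 1: one-step estimate.} For $M_1,M_2\in\SHhP$ with norms at most $\rho:=\gamma_0+2$, use~\eqref{eq:scheme} and note that the affine term $L_{1,h,\tau}$ cancels. The remaining terms give
\begin{equation*}
  \cL_{h,\tau}M_1-\cL_{h,\tau}M_2 = \exp{\tau\cA_h}\bigl(\exp{\tau G_h}M_1-\exp{\tau G_h}M_2\bigr).
\end{equation*}
By Lemma~\ref{lemma:unif_op_sg}, $\normLHh{\exp{\tau\cA_h}}\le 1+C\tau$. This holds with $C$ independent of $h$: take the bound with $\omega_0<0$ and use that the semigroup is contractive-like up to the constant. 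If only $\le C$ is available, I would instead apply the variation-of-constants form of the continuous problem for $\exp{\tau F_h}$. For the nonlinear part, I write $\exp{\tau G_h}M_i = M_i + \int_0^\tau G_h\exp{sG_h}M_i\,\ds$. Lemma~\ref{lemma:unif_nlinflow} keeps $\exp{sG_h}M_i$ in a ball of radius $(1+T\rho\normLH{S})\rho$, and Lemma~\ref{lemma:unif_lipschitz} together with Grönwall then gives $\normLHh{\exp{\tau G_h}M_1-\exp{\tau G_h}M_2}\le \exp{C\tau}\normLHh{M_1-M_2}$. Hence
\begin{equation*}
  \normLHh{\cL_{h,\tau}M_1-\cL_{h,\tau}M_2}\le (1+C\tau)\exp{C\tau}\normLHh{M_1-M_2},
\end{equation*}
where $C$ depends only on $\gamma_0$, $T$ and $\normLH{S}$.

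\textbf{Step 2: iterate.} Lemma~\ref{lemma:infdim_to_unif} gives $\normLHh{\Pt_{k,h}}\le\gamma_0+1$, and the hypothesis gives $\normLHh{P_{k,h}}\le\gamma_0+2$, for $k\le n$. So Step 1 applies at each of the steps $k=0,\dots,n$, and
\begin{equation*}
  E_4^{n+1}=\normLHh{D_{n+1,h}}\le \exp{CT}\normLHh{D_{0,h}}\le \exp{CT}C\normLH{\cA P_0}h^2(1+|\log h|)
\end{equation*}
by Lemma~\ref{lemma:smooth_init_bound}. This is within the claimed bound; the extra $\tau$ term only accommodates a cruder version of the stability constant.

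\textbf{Main obstacle.} The delicate point is the stability factor of the linear part, because it is raised to the $n$-th power. Lemma~\ref{lemma:unif_op_sg} as stated gives only $\normLHh{\exp{\tau\cA_h}}\le C$ with $C$ possibly greater than $1$, and $C^n$ would be fatal. I would avoid powers of individual semigroup factors altogether. Expand instead $D_{n+1,h}=\exp{(n+1)\tau\cA_h}D_{0,h}+\sum_{k=0}^{n}\tau\exp{(n+1-k)\tau\cA_h}R_k$, where $R_k$ collects the $G_h$-contributions (terms $L_{2}$ and $L_3$ of~\eqref{eq:scheme}) evaluated at $\Pt_{k,h}$ and $P_{k,h}$. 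Only single semigroup factors then appear, each bounded by $C$. By Lemmas~\ref{lemma:unif_lipschitz} and~\ref{lemma:unif_nlinflow}, $\normLHh{R_k}\le C\normLHh{D_{k,h}}+C\tau$, where the $C\tau$ comes from $L_3$ through the second-derivative bound. A discrete Grönwall argument then yields $E_4^{n+1}\le D_0(h^2(1+|\log h|)+\tau)$, which explains the $\tau$ term in the statement.
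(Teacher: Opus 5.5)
Your argument in the ``Main obstacle'' paragraph is correct and is exactly the paper's proof. You unroll \eqref{eq:scheme} so that only single factors $\exp{(n+1-k)\tau\cA_h}$ appear, each bounded by Lemma~\ref{lemma:unif_op_sg}. You control the $L_{2,h,\tau}$-differences by Lemma~\ref{lemma:unif_lipschitz} with $\rho=\gamma_0+2$, bound each $L_{3,h,\tau}$-term separately by $C\tau^2$ via Lemma~\ref{lemma:unif_nlinflow}, and close with the discrete Gr\"onwall inequality.

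Steps 1--2, however, should be discarded rather than kept as the main line. Lemma~\ref{lemma:unif_op_sg} only yields $\normLHh{\exp{\tau\cA_h}M_h}\le C\normLHh{M_h}$ with some $C\ge 1$. Taking $M_h=I$ shows that the norm of $\exp{\tau\cA_h}$, as an operator on $\cL(H_h)$, equals $\normLHh{\exp{\tau A_h}}^2$. A bound $1+C\tau$ would therefore require uniform quasi-contractivity of $\exp{tA_h}$, which Assumption~\ref{ass:disc_updated} does not provide. Passing to the variation-of-constants form of $\exp{\tau F_h}$ does not help either, since its Lipschitz constant is exactly this norm.

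Consequently, your remark that the $\tau$-term is only a safety margin is unsupported. In the valid argument it arises genuinely as $\sum_{k=0}^{n}C\tau^2\le CT\tau$ from the $L_{3,h,\tau}$ remainders.
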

\begin{proof}
Using~\eqref{eq:scheme} to recursively express the $(n+1)$th error in terms of the errors at previous time steps gives
\begin{equation*}
    \begin{split}
      &P_{n+1,h}- \Pt_{n+1,h} \\
      &= \exp{n\tau \cA_h} \bigl( P_{0,h} - \Pt_{0,h} \bigr)  + \sum_{k=0}^{n} \exp{ (n-k) \tau \cA_h} \tau  ( G_h P_{k,h} - G_h \Pt_{k,h} \bigr)  \\
      & \quad + \sum_{k=0}^{n} \exp{ (n-k) \tau \cA_h} \int_0^\tau (\tau - s) \bigl(\ddds  \exp{s G_h} P_{k,h}- \ddds \exp{s G_h} \Pt_{k,h}\bigr) \, \ds .
    \end{split}
\end{equation*}
Taking the norm and using  Lemmas~\ref{lemma:unif_op_sg}, \ref{lemma:unif_nlinflow}, \ref{lemma:unif_lipschitz}, \ref{lemma:smooth_init_bound}, and \ref{lemma:infdim_to_unif} together with the hypothesis $\normLH{\pi_h^*P_{k,h}\pi_h}\le \gamma_0+2$ results in
\begin{equation*}
    E^{n+1}_{4} \leq C\normLH{\cA P_0} h^2 ( 1 + \vert \log h \vert)+C(\gamma_0) \tau \sum_{k=0}^n E^k_{4} + C(\gamma_0) \tau.
\end{equation*}
 Thus, the desired bound follows by applying a discrete Grönwall inequality, see e.g.\ \cite[Proposition 4.1]{Emmrich1999}.
\end{proof}
\begin{lemma}\label{lemma:method_bounded_expanse}
  Let Assumption~\ref{ass:operators}, \ref{ass:p_0_reg} and~\ref{ass:disc_updated}. Then there exist constants $h^* \leq h_0$ and $\tau^*\leq \tau_0$ such that 
\begin{equation*}
E^n_{4} \leq D \bigl(h^{2} ( 1 + \vert \log h \vert ) + \tau \bigr),
\end{equation*}
for all $n = 0, \ldots, N $, $h \leq h^*$, and $\tau \leq \tau^*$. Here, $D$ only depends on the parameter~$\gamma_0$ and the problem data $T,\normLH{Q},\normLH{S}$.
\end{lemma}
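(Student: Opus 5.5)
The proof is an induction on $n$ that uses Lemma~\ref{lemma:bounded_expanse_induction} as the inductive step. The hypothesis of that lemma, $\normLH{\pi_h^*P_{k,h}\pi_h}\le \gamma_0+2$ for $k=0,\dots,n$, is regenerated at each step from the error bound itself, using the uniform bound $\normLH{\pi_h^*\Pt_{k,h}\pi_h}\le\gamma_0+1$ of Lemma~\ref{lemma:infdim_to_unif}.

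First choose the constants. Set $D = \max\{D_0, C\normLH{\cA P_0}\}$, where $D_0$ is the constant of Lemma~\ref{lemma:bounded_expanse_induction} and $C\normLH{\cA P_0}$ is the constant of Lemma~\ref{lemma:smooth_init_bound}. Since $\normLH{\cA P_0}$ is controlled through $\gamma_0$, $D$ depends only on $\gamma_0$ and the problem data. Next choose $h^*\le h_0$ and $\tau^*\le\tau_0$ so small that
\begin{equation*}
  D\bigl(h^2(1+\vert\log h\vert)+\tau\bigr)\le 1 \quad\text{for all } h\le h^*,\ \tau\le\tau^*.
\end{equation*}
This is possible because $h^2(1+\vert\log h\vert)\to 0$ as $h\to0$; the function is monotone for small $h$, or one may simply take its supremum over $(0,h^*]$.

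Base case $n=0$. Here $E_4^0=\normLH{\pi_h^*(\Pt_{0,h}-P_{0,h})\pi_h}$, which is bounded by $C\normLH{\cA P_0}h^2(1+\vert\log h\vert)\le D(h^2(1+\vert\log h\vert)+\tau)$ by Lemma~\ref{lemma:smooth_init_bound}.

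Inductive step. Suppose the bound on $E_4^k$ holds for $k=0,\dots,n$ with $n<N$. The triangle inequality gives
\begin{equation*}
  \normLH{\pi_h^*P_{k,h}\pi_h}\le \normLH{\pi_h^*\Pt_{k,h}\pi_h}+E_4^k\le (\gamma_0+1)+1=\gamma_0+2, \qquad k=0,\dots,n.
\end{equation*}
This is exactly the hypothesis of Lemma~\ref{lemma:bounded_expanse_induction}, so $E_4^{n+1}\le D_0(h^2(1+\vert\log h\vert)+\tau)\le D(\cdots)$, which closes the induction up to $n=N$.

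The main obstacle is that the constant must not grow with $n$. The induction only works because Lemma~\ref{lemma:bounded_expanse_induction} returns a bound with the fixed constant $D_0$: it applies the discrete Grönwall inequality over all previous steps at once, not one step at a time. If the constants compounded from step to step, the smallness choice of $h^*$ and $\tau^*$ would fail. So I would verify that $D_0$ is independent of $n$ and $N$, depending only on $\gamma_0$, $T$, $\normLH{Q}$ and $\normLH{S}$, which is what that lemma states. I would also check that $h^*$ and $\tau^*$ are chosen after $D$, so that the argument is not circular.
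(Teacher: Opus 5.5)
Your proposal is correct and follows the paper's proof step by step: fix $D\ge D_0$ large enough to also absorb the constant of Lemma~\ref{lemma:smooth_init_bound}, shrink $h^*,\tau^*$ so that $D\bigl(h^2(1+\vert\log h\vert)+\tau\bigr)\le 1$, and induct, using Lemma~\ref{lemma:infdim_to_unif} and the triangle inequality to regenerate the hypothesis $\normLH{\pi_h^*P_{k,h}\pi_h}\le\gamma_0+2$ of Lemma~\ref{lemma:bounded_expanse_induction}. Your side claim that $\normLH{\cA P_0}$ is controlled by $\gamma_0$ is stated without justification; it can be recovered from $\dot{\Pt}_h(0)=\pi_h\cA P_0\pi_h^*+Q_h-\Pt_{0,h}S_h\Pt_{0,h}$, the paper is equally informal about this dependence, and Theorem~\ref{theorem:main} allows dependence on $\normLH{\cA P_0}$ anyway.
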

\begin{proof}
By Lemma~\ref{lemma:smooth_init_bound}, we can choose $h^* \leq h_0$ and $\tau^*\leq \tau_0$ such that 
\begin{equation*}
\normLH{\pi_h^*\Pt_{0,h}\pi_h-P_{0}}\leq D\bigl(h^{2} ( 1 + \vert \log h \vert ) + \tau \bigr)\leq 1
\end{equation*}
for all  $h\le h^*$ and $\tau\le \tau^*$, where $D\geq D_0$. Let $h\le h^*$ and $\tau\le \tau^*$ and assume that $E^k_{4}\le D\bigl(h^{2} ( 1 + \vert \log h \vert ) + \tau \bigr)$ for $k=0,\ldots, n$. Lemma~\ref{lemma:infdim_to_unif} then gives
\begin{equation*}
\normLH{\pi_h^*P_{k,h}\pi_h}\leq \normLH{\pi_h^*\Pt_{k,h}\pi_h}+E^k_{4}\le (\gamma_0+1)+1,
\end{equation*}
and Lemma~\ref{lemma:bounded_expanse_induction} implies that $E^{n+1}_{4}\le D_0\bigl(h^{2} ( 1 + \vert \log h \vert ) + \tau \bigr)$. As $E^0_{4}=\normLH{\pi_h^*\Pt_{0,h}\pi_h-P_{0}}$, the sought after bound follows by induction. 
\end{proof}

Combining Theorem~\ref{thm:KrollerKunish}, Lemmas~\ref{lemma:init_cond_diff_bound}, \ref{lemma:infdim_to_unif},  and~\ref{lemma:method_bounded_expanse} to bound each of the four terms in~\eqref{eq:error_formulation} now proves the main result, Theorem~\ref{theorem:main}.

\section{Numerical experiments}\label{sec:experiments}
We illustrate the theoretical results by applying the full discretization to a DRE arising from a LQR problem. We use the same setup as in~\cite{HansenStillfjordAaberg2026}, but generalized from 1D to 2D. Thus 
$H = L^2(\Omega)$ with $\Omega = (0,1) \times (0,1)$. We choose $A$ to be the Laplacian with periodic boundary conditions, i.e.\ $A = \Delta$ and $\domain{A} = H^2_{\mathbb{T}}(\Omega)$. With $Ex = \int_{(1/2, 1) \times (0,1)} x$, the problem is a controlled heat equation on the unit square, where we observe the mean of the state $x$ on one half of the domain $\Omega$.
We let the control consist of a single input signal which scales a function on $\Omega$. Thus $B : \mathbb{R} \rightarrow H$ with $Bu = u \xi$ where we let $\xi \in H^2(\Omega)$. Then $B^*v = \iprod{v, \xi}_H$ so that $S = BB^* : H \rightarrow H$ satisfies $Sv = BB^*v = \iprod{\xi,v}_H \xi$. We construct the initial condition $P_0 = G^*G: H \rightarrow H$ in a similar way, by setting $G^*x = x \zeta$, where $\zeta \in H^2(\Omega)$.
By construction, Assumption~\ref{ass:operators} is now satisfied. Since $G$ maps into $H^2(\Omega)$, so is Assumption~\ref{ass:p_0_reg}.

For the spatial discretization, we use the finite element method with piecewise linear basis functions $\phi_j$, $j=1, \ldots, N_h$. Thus $H_h := \operatorname{span}\big(\{\phi_j\}_{j=1}^{N_h}\big)$, and the operators $A_h$ are defined via $\iprod{A_h u_h , v_h}_{H_h} = \iprod{A u_h, v_h}_{H} = -\iprod{\nabla u_h, \nabla v_h}_H$ for all $u_h, v_h \in H_h$. We use a simple grid where $\Omega$ is first cut into equi-sized squares which are then subdivided into triangles. With $N_x$ squares in each cardinal direction, we have $\cO(N_x^2)$ degrees of freedom in total.

Since the grid is quasi-uniform, we have the bound $\norm{\pi_h x - x} \le Ch^2\|Ax\|$, see e.g.~\cite[Inequality (5.39)]{LarssonThomee2009}. Combined with the dense embedding of $\domain{A}$ in $H$, this shows that $\norm{\pi_h x - x} \to 0$ as $h \to 0$ for any $x \in H$. Inequality~\eqref{eq:elliptic_bound} follows from e.g.~\cite[Theorem 5.24 and (5.25)]{LarssonThomee2009}. Further, the inequalities~\eqref{eq:unif_analyt_0} and~\eqref{eq:unif_analyt_1} can be found in~\cite[Proposition 4.1.2.1]{LasieckaTriggiani2000}.
In conclusion, also Assumption~\ref{ass:disc_updated} is fulfilled.

As outlined in~\cite{HansenStillfjordAaberg2026}, the approximation $P_h(t)$ can be represented by a matrix $\Pb(t) \in \R^{N_h \times N_h}$ which solves a matrix-valued DRE
\begin{equation*}
\Mb \dot{\Pb} \Mb = \Mb \Pb \Ab + \Ab^T \Pb \Mb + \Eb^T \Eb - \Mb \Pb \Bb \Bb^T \Pb \Mb,
\end{equation*}
where $\Mb$ is the mass matrix, $\Ab$ is the stiffness matrix and $\Eb$ and $\Bb$ are the matrix representations of $E$ and $B$, respectively. We get the matrix representation of $P_{n,h}$ by applying the Lie splitting scheme to this equation. Here, we choose $N_x = 2^k$, $k = 2, \ldots, 7$, corresponding to approximations $\Pb(t) \in \R^{N_x^2 \times N_x^2}$ and to $h = 2^{-k}$. For each $N_x$, we try different numbers of time steps, $N_t = 2^j$, $j = 1, \ldots, 13$, corresponding to $\tau = 2^{-j}$.

Since an exact solution is not available and cannot be ``manufactured'' without destroying the structure of the DRE, we use the approximation with $N_x = 2^7$ and $N_t = 2^{13}$ as a reference solution $\{P^{\text{ref}}_{n}\}_{n=1}^{2^{13}}$. When comparing it to $\{P_{n,h}\}_{n=1}^{2^j}$, we restrict it to $\{P^{\text{ref}}_{2^{j-13}n}\}_{n=1}^{2^j}$ so that the approximations are given on the same temporal grid with $2^j$ points. In order for the spatial grids also agree, we extend $P_{n,h}$ to $\Id_h^{\hfine} P_{n,h} (\Id_h^{\hfine})^*$, where $\hfine = 2^{-8}$ and $\Id_h^{\hfine}$ is the injection operator from $H_h$ to $H_{\hfine}$. 
In total, we then compute the relative discretized $\maxnorm{\cdot}$-errors
\begin{equation*}
  \textnormal{err}_{\tau, h} = \frac{\max_{n = 1, \ldots, 2^j} \normLH{\Id_{\hfine} (\Id_h^{\hfine} P_{n,h} (\Id_h^{\hfine})^* - P^{\text{ref}}_{2^{j-13}n})(\Id_{\hfine})^*}}{\max_{n = 1, \ldots, 2^j} \normLH{\Id_{\hfine} P^{\text{ref}}_{2^{j-13}n} (\Id_{\hfine})^*}}
\end{equation*}
when $\tau = 2^{-j}$.

From Theorem~\ref{theorem:main}, we expect that $\textnormal{err}_{\tau, h}$ behaves as $\cO(h^2 + \tau)$  except for logarithmic factors. This is indeed what we observe in Figure~\ref{fig:error_total}.
\begin{figure}
  \centering
  \includegraphics[width=\textwidth]{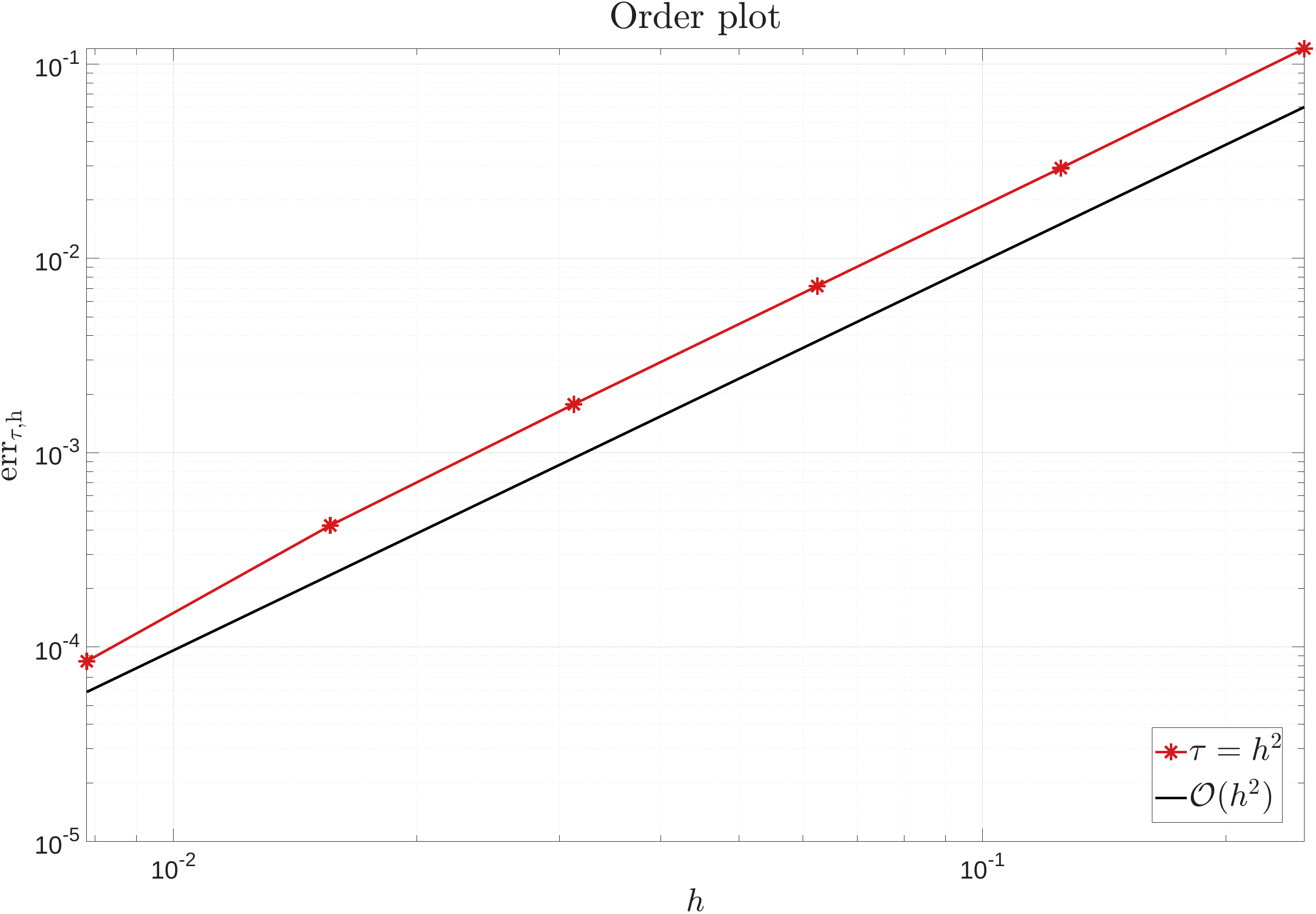}
  \caption{The errors $\textnormal{err}_{\tau, h}$ plotted versus $h$ when $\tau = h^2$. The errors converge like $\cO(h^2)$ as expected.}
  \label{fig:error_total}
\end{figure}
In Figure~\ref{fig:error_tau}, we have additionally plotted the errors against $\tau$, with one curve for each $h$. Here, we can see how the error decreases like $\cO(\tau)$ when the temporal errors are dominant, but eventually stagnates at the level of the spatial error. Conversely, Figure~\ref{fig:error_h} shows how the error decreases like $\cO(h^2)$ until stagnating due to the temporal error.
\begin{figure}
  \centering
  \includegraphics[width=\textwidth]{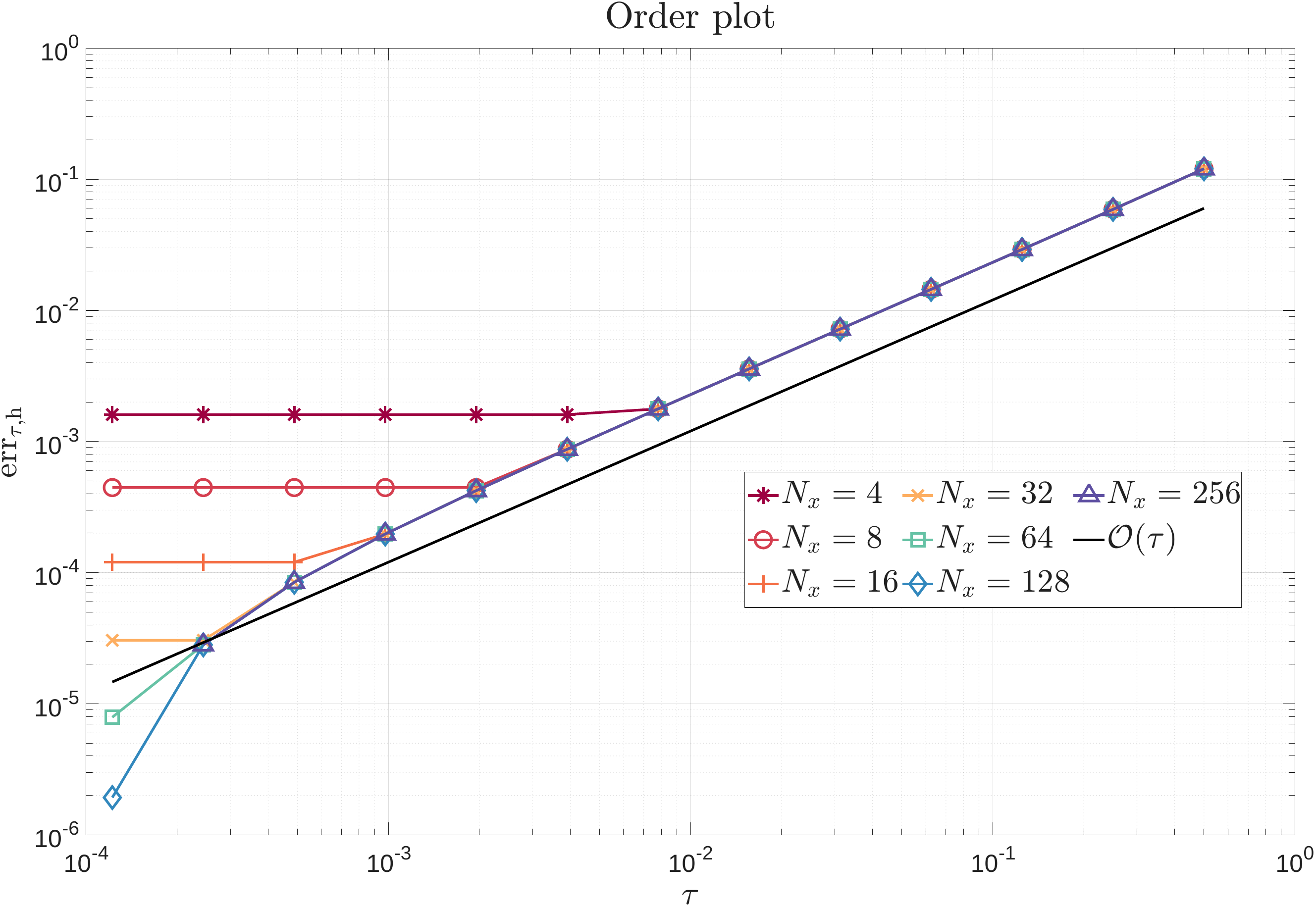}
  \caption{The errors $\textnormal{err}_{\tau, h}$ plotted versus $\tau$ for different $h$. The errors converge like $\cO(\tau)$ until stagnating due to the spatial error.}
  \label{fig:error_tau}
\end{figure}

\begin{figure}
  \centering
  \includegraphics[width=\textwidth]{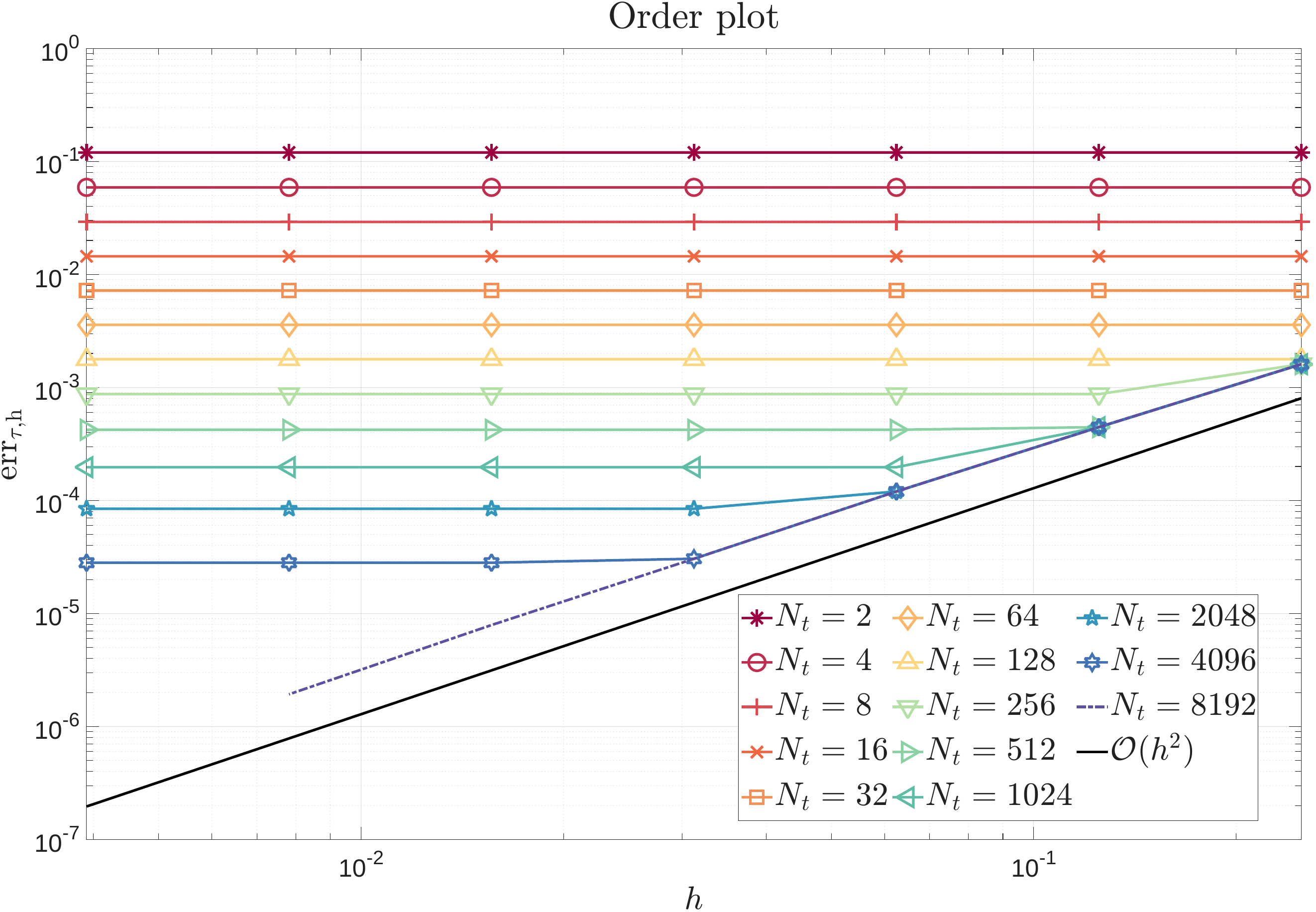}
  \caption{The errors $\textnormal{err}_{\tau, h}$ plotted versus $h$ for different $\tau$. The errors converge like $\cO(h^2)$ until stagnating due to the temporal error.}
  \label{fig:error_h}
\end{figure}

\section{Conclusions}\label{sec:conclusions}
We have proved convergence of a full discretization of operator-valued DRE with optimal orders, except for logarithmic factors. This behaviour is also observed in practice when the stated assumptions are fulfilled. These are fairly mild, in the sense that they cover all DREs arising from distributed control LQR applications, as long as the initial condition $P_0$ is sufficiently regular. Compared to~\cite{HansenStillfjordAaberg2026}, only the assumption on the spatial discretization is new, and this covers all standard FEM discretizations. We note that the analysis does not cover DREs arising from boundary control applications. These result in either an unbounded operator $B$ or an unbounded $C$, or both. In either case, the DRE solution suffers a loss of regularity, which has a direct effect on the speed of convergence that any discretization can achieve. Simply stating such problems properly requires a much more technical framework and as far as we are aware there are no spatial convergence results similar to~\cite{KrollerKunisch1991}. A significant amount of preliminary work is therefore required before a rigorous analysis of a full discretization such as the one presented here can be attempted. It is our aim to consider this in the future.

\section*{Acknowledgments}
The authors were partially supported by the Swedish Research Council under grants 2023-03982 (EH, TS, TÅ) and 2023-04862 (EH, TS).

\bibliography{refs}
\bibliographystyle{abbrvnat}

\appendix
\section{The case when $A$ does not generate an exponentially stable semigroup}\label{app:reformulation}

In this section, we assume that $A$ satisfies all the statements in Assumption~\ref{ass:operators}, except that the generated semigroup $\exp{tA}$ is not exponentially stable. In this case, the analyticity of $\exp{tA}$ implies that there exists a $\lambda > 0$ such that $A - \lambda I$ generates an exponentially stable semigroup. We can therefore make a change of variables in~\eqref{eq:DRE} such that the transformed equation has a $A$ which satisfies all of Assumption~\ref{ass:operators}. In the following, we describe how this affects the DRE and the convergence analysis.

Let $P$ be the solution to~\eqref{eq:DRE} and define
\begin{equation*}
  \bar{P}(t) = \exp{-2 \lambda t} P(t).
\end{equation*}
Then $\dot{\bar{P}}(t) = -2\lambda \bar{P}(t) + \exp{-2\lambda t} \dot{P}(t)$, i.e.
\begin{equation*}
  \dot{\bar{P}}(t) = \bar{A}^* \bar{P}(t) + \bar{P}(t) \bar{A} + \bar{Q}(t) - \bar{P}(t) \bar{S}(t) \bar{P}(t),
\end{equation*}
where
\begin{align*}
  \bar{A} &= A - \lambda I, \\
  \bar{Q}(t) &= \exp{-2\lambda t} Q, \\
  \bar{S}(t) &= \exp{2\lambda t} S.
\end{align*}
Now $\bar{A}$ generates an exponentially stable semigroup, and the solution $\bar{P}$ has the same regularity and positivity properties as $P$ since the transformation consists of multiplication by a scalar-valued positive function. For the same reason, the time-varying terms $\bar{Q}$ and $\bar{S}$ do not cause issues. In particular,
\begin{align*}
  \maxnorm{\bar{Q}} &\le \normLH{Q},\\
  \maxnorm[\bigg]{\ddjx{j}{t}\bar{S}(t)} &\le (2\lambda)^j\exp{2\lambda T} \normLH{S}, \quad j = 0, 1, \ldots.
\end{align*}
By using these quantities instead of $\normLH{Q}$ and $\normLH{S}$ where necessary, the results in~\cite{HansenStillfjordAaberg2026} and the present paper carry over with minor modifications.
Some notable differences include:
\begin{itemize}
\item The variation of constants formula is now
  \begin{equation*}
    \dot{\bar{P}}(t) = \exp{t\bar{\cA}} P_0 + \int_0^t \exp{(t-s)\bar{\cA}} \bar{Q}(s) \ds,
  \end{equation*}
  which gives rise to the $\maxnorm{\bar{Q}}$-terms.

\item The time-varying nonlinearity $G(t) \in \cL(\SH)$ defined by $G(t)P = -P\bar{S}(t)P$ is locally Lipschitz-continuous uniformly in $t$: If $\normLH{P_j} \le \rho$, $j = 1, 2$, then
  \begin{equation*}
    \normLH{G(t)P_1 - G(t)P_2} \le 2\rho \maxnorm{\bar{S}} \normLH{P_1 - P_2}.
  \end{equation*}
  Thus Lemma 2.12 in~\cite{HansenStillfjordAaberg2026} and  Lemma~\ref{lemma:unif_lipschitz} in the present paper still hold.
  
\item The solution to the nonlinear sub-problem $\dot{\bar{P}}(t) = - \bar{P}(t) \bar{S}(t) \bar{P}(t)$, $\bar{P}(0) = P_0$, is given by
  \begin{align*}
    \bar{P}(t) &= \bigg( I + P_0 \int_0^t \bar{S}(\tau) \diff{\tau}  \bigg)^{-1} P_0 \\
               &= \bigg( I + \frac{\exp{2\lambda t} - 1}{2\lambda} P_0 S \bigg)^{-1} P_0.
  \end{align*}
  Thus Lemma 2.11 in~\cite{HansenStillfjordAaberg2026} and Lemma~\ref{lemma:unif_nlinflow} in the present paper still hold.
  
  \item In~\cite{HansenStillfjordAaberg2026}, the $S$ in the decomposition of the Lie-splitting rest-term $R$ into integrals of $\lambda_1$ and $\lambda_2$ in Lemma 3.3 and Lemma 3.10, needs to be evaluated at $t+r$. In addition, $\lambda_2$ contains the extra term
    \begin{equation*}
      -\bar{P}(t+r) \dot{\bar{S}}(t+r) \bar{P}(t+r).
    \end{equation*}
    which becomes the extra term $-\bar{P}(s) \dot{\bar{S}}(s) \bar{P}(s)$  in $f$. But due to the uniform bound on $\bar{S}$, this only marginally changes the error constants.
\end{itemize}

\end{document}